\newtheorem{theorem}{Theorem}[section]
\newtheorem{proposition}[theorem]{Proposition}
\newtheorem{lemma}[theorem]{Lemma}
\theoremstyle{definition}
\newtheorem{example}[theorem]{Example}
\newcommand{\clb}{\mathscr{B}}
\newcommand{\clh}{\mathcal{H}}
\title{Nayak's theorem for compact operators}
\author{B V Rajarama Bhat}
\address{Indian Statistical Institute, Bangalore}
\email{bvrajaramabhat@gmail.com}
\author{ Neeru Bala}
\address{Indian Institute of Technology (Indian School of Mines),
Dhanbad} \email{neerusingh41@gmail.com}
 \subjclass{47A10,47B06,47B07} \keywords{Compact
operator, singular value, spectrum,  norm convergence}
\begin{document}
\begin{abstract}
Let $A$ be an $m\times m$ complex matrix and let $\lambda _1,
\lambda _2, \ldots , \lambda _m$ be the eigenvalues of $A$ arranged
such that $|\lambda _1|\geq |\lambda _2|\geq \cdots \geq |\lambda
_m|$ and for $n\geq 1,$ let $s^{(n)}_1\geq s^{(n)}_2\geq \cdots \geq
s^{(n)}_m$ be the singular values of $A^n$. Then a famous theorem of
Yamamoto (1967) states that
$$\lim _{n\to \infty}(s^{(n)}_j )^{\frac{1}{n}}= |\lambda _j|, ~~\forall \,1\leq j\leq
m.$$ Recently S. Nayak strengthened this result very significantly
by showing that the sequence of matrices $|A^n|^{\frac{1}{n}}$
itself  converges to a positive matrix $B$ whose
eigenvalues are $|\lambda _1|,|\lambda _2|,$ $\ldots , |\lambda
_m|.$  Here this theorem has been extended  to arbitrary compact
operators on infinite dimensional complex separable Hilbert spaces.
The proof makes use of  Nayak's theorem, Stone-Weirstrass theorem,
Borel-Caratheodory theorem and some technical results of Anselone
and Palmer on collectively compact operators. Simple examples show
that the result does not hold for general bounded operators.

\end{abstract}
\maketitle
\section{Introduction}

The main aim of this paper is to study convergence of a sequence of
positive bounded linear operators of the form $(|T^n|^{1/n})$, where
$T$ is a compact operator on a separable Hilbert space $\clh$.
Sequences of this form are of essential importance in spectral
theory
as seen from the classical spectral radius formula. 

 For $m\geq 1,$ any complex matrix $A=[a_{ij}]_{1\leq i,j\leq m}$
is considered as an operator on the finite dimensional Hilbert space
$\mathbb{C}^m$ in the usual way. By spectral radius formula we know
that $\lim _{n\to \infty}\|A^n\|^{\frac{1}{n}}$ exists and equals to
the spectral radius of $A$. A well-known result of Yamamoto goes
much further. Let $\lambda _1, \lambda _2, \ldots , \lambda _m$ be
the eigenvalues of $A$. We assume that $|\lambda _1|\geq |\lambda
_2|\geq \cdots \geq |\lambda _m|$.  For $n\geq 1,$ let
$s^{(n)}_1\geq s^{(n)}_2\geq \cdots \geq s^{(n)}_m$ be the singular
values of $A^n$. Then the main theorem of Yamamoto \cite{YAMA1}
states that
$$\lim _{n\to \infty}(s^{(n)}_j)^{\frac{1}{n}} = |\lambda _j|, ~~\forall\, 1\leq j\leq
m.$$ The spectral radius formula is the special case with $j=1.$ In
\cite{SOY}, S. Nayak proved a spatial version of Yamamoto's theorem.
In fact, he showed that the sequence of matrices
$(|A^n|)^{\frac{1}{n}}$ converges to a positive matrix $B$, whose
eigenvalues are $|\lambda _1|, |\lambda _2|, \ldots |\lambda_m|$.
From this Yamamoto's result follows as a simple corollary.

Yamamoto tried to extend his results to compact operators on
infinite dimensional separable Hilbert spaces and obtained some
partial results in \cite{YAMA2}. A satisfactory extension was got by
Davis \cite{CHAN}. Presently we extend Nayak's result to compact
operators and show that for any compact operator $A$,
$(|A^n|)^{\frac{1}{n}}$ converges in norm to a positive operator
$B$, whose spectrum is $\{|\lambda |:\lambda \in \sigma (A)\}.$ The
proof is considerably different from that of \cite[Theorem
3.8]{SOY}, for technical reasons.

Section 2 contains some introductory material and some basic tools
which we are going to need. One specific concept we will use is the
notion of collectively compact operators.

Section 3 contains the main result generalizing Nayak's theorem to
compact operators. We also provide some examples to show that the
result does not hold for general bounded operators even if relax the
convergence to strong or weak operator topology.

\section{Preliminaries}

Let $\mathscr{B}(\mathcal{H})$ be the space of all bounded operators
on a complex separable Hilbert space $\mathcal{H}.$ For any bounded
operator $T$, $\mbox{ran}(T)$ and $\mbox{ker}(T)$ will denote the
range and the kernel of $T$, respectively. For any vector space
$\mathcal{M}$, $\mbox{dim}(\mathcal{M})$ will denote the dimension
of $\mathcal{M}.$

To begin with we recall a case of Holder-McCarthy inequality.
 \begin{proposition}\cite[Theorem 1.4, Page 5]{FURUTA}\label{thm inequ furuta}
        Let $S\in\mathscr{B} (\clh)$ be a positive operator. Then
        \begin{align*}
            \langle S^rx,x\rangle\leq\langle Sx,x\rangle^r
        \end{align*}
        for all $0<r<1$ and every unit vector $x\in\clh$.
    \end{proposition}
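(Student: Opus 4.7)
The plan is to deduce the inequality from the spectral theorem for positive self-adjoint operators combined with Jensen's inequality applied to the concave function $t \mapsto t^r$ on $[0,\infty)$.

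First, since $S$ is a positive bounded operator on $\clh$, the spectrum $\sigma(S)$ is contained in the compact interval $[0,\|S\|]$, and $S$ admits a spectral resolution $S = \int_{0}^{\|S\|} \lambda\, dE(\lambda)$, where $E$ is the associated projection-valued measure. For a fixed unit vector $x \in \clh$, the set function $\mu_x(B) := \langle E(B)x, x\rangle$ is a Borel probability measure on $[0,\|S\|]$, because $E$ is a spectral measure and $\|x\|=1$ gives $\mu_x([0,\|S\|]) = \langle x,x\rangle = 1$.

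Next I would use the functional calculus to translate both sides of the inequality into integrals against $\mu_x$. Specifically, for any continuous function $f$ on $[0,\|S\|]$ one has $\langle f(S)x, x\rangle = \int f(\lambda)\, d\mu_x(\lambda)$. Applied to $f(\lambda)=\lambda$ and $f(\lambda)=\lambda^r$ (both continuous on $[0,\|S\|]$, noting $\lambda^r$ is defined on $[0,\infty)$ for $0<r<1$), this yields
\begin{align*}
\langle Sx,x\rangle &= \int_{0}^{\|S\|} \lambda\, d\mu_x(\lambda), \\
\langle S^r x, x\rangle &= \int_{0}^{\|S\|} \lambda^r\, d\mu_x(\lambda).
\end{align*}

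Finally I would invoke Jensen's inequality. Since $0<r<1$, the function $\varphi(t)=t^r$ is concave on $[0,\infty)$, so for the probability measure $\mu_x$,
\begin{align*}
\int \lambda^r\, d\mu_x(\lambda) \;\leq\; \Bigl(\int \lambda\, d\mu_x(\lambda)\Bigr)^{r},
\end{align*}
which is precisely $\langle S^r x,x\rangle \leq \langle Sx,x\rangle^r$. The main conceptual step is recognizing that the unit vector $x$ produces a probability measure and reducing an operator inequality to a one-variable concavity statement; no step poses a serious technical obstacle once the spectral calculus is in hand.
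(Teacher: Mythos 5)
Your proof is correct. The paper does not prove this proposition at all --- it simply cites it from Pe\'{c}ari\'{c}--Furuta--Mi\'{c}i\'{c}--Seo --- and the argument you give (the spectral measure of $S$ at a unit vector $x$ is a probability measure, so Jensen's inequality for the concave function $t\mapsto t^r$ on $[0,\infty)$ yields $\int \lambda^r\,d\mu_x \leq \bigl(\int \lambda\,d\mu_x\bigr)^r$) is the standard proof of the H\"older--McCarthy inequality, with the Jensen direction applied correctly for concavity. Nothing is missing.
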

As a  special case of the above result, we get that
    \begin{align}
        \langle|T^n|^{1/n}x,x\rangle\leq\langle |T^n|x,x\rangle^{1/n}
    \end{align}
    for every $T\in\clb(\clh)$ and unit vector $x\in\clh$.
This inequality plays a crucial role in the study of limits of
$(|T^n|^{1/n})$.

One of the key ingredient of our proof is the notion of collectively compact sets of operators. This has been extensively studied in the literature: See \cite{PALMER,PALMER1} for more details. 
We say, a set $\mathcal{K}\subseteq\clb(\clh)$ is  {\em collectively
compact\/}  if and only if the set $\{Kx:K\in\mathcal{K},\,\|x\|\leq
1\}$ has compact closure. Originally the notion of collectively
compact set of operators was introduced to study approximate
solution of integral equations in \cite{ANSELONE}. Various
properties of such sets are studied in \cite{ANSEL,ATKINSON2}. We
simply recall the results we are going to need.

\begin{proposition}\cite[Proposition 2.1]{PALMER}\label{Palmer 2.1}
    Let $T,T_n\in\clb(\clh)$ for $n\in\mathbb{N}$. Then the following are equivalent.
    \begin{enumerate}
        \item $(T_n)$ converges to $T$ in SOT and $(T_n)$ is collectively compact.
        \item $(T_n)$ converges to $T$ in SOT and $(T_n-T)$ is collectively compact, and $T$ is compact.
    \end{enumerate}
\end{proposition}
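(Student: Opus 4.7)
The plan is to prove the equivalence by factoring out the common hypothesis ($T_n\to T$ in SOT) and reducing both implications to a single topological fact: if $A,B\subseteq\clh$ are relatively compact, then $A\pm B$ is also relatively compact, because it is the continuous image of the compact set $\overline{A}\times\overline{B}$ under the map $(a,b)\mapsto a\pm b$.

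For the direction $(1)\Rightarrow(2)$, the main content is to show that $T$ is compact. I would fix any $x\in\clh$ with $\|x\|\le 1$ and note that by SOT convergence $Tx=\lim_n T_n x$ lies in the closure of $\{T_n x:n\in\mathbb{N}\}\subseteq\{T_n y:\|y\|\le 1,\,n\in\mathbb{N}\}$. Hence $\{Tx:\|x\|\le 1\}$ is contained in the closure of $\{T_n y:\|y\|\le 1,\,n\in\mathbb{N}\}$, which is compact by hypothesis, so $T$ is compact. Collective compactness of $(T_n-T)$ then follows from the inclusion
\[
\{(T_n-T)x:\|x\|\le 1,\,n\in\mathbb{N}\}\subseteq\{T_n x:\|x\|\le 1,\,n\in\mathbb{N}\}-\{Ty:\|y\|\le 1\},
\]
combined with the topological fact above.

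For $(2)\Rightarrow(1)$, I would use the complementary inclusion
\[
\{T_n x:\|x\|\le 1,\,n\in\mathbb{N}\}\subseteq\{(T_n-T)x:\|x\|\le 1,\,n\in\mathbb{N}\}+\{Ty:\|y\|\le 1\}.
\]
Compactness of $T$ makes the second summand relatively compact; collective compactness of $(T_n-T)$ makes the first relatively compact; and the same topological fact yields collective compactness of $(T_n)$.

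The main (rather mild) obstacle is producing compactness of $T$ in the first implication: SOT convergence alone does not force the limit to be compact (for example, an increasing sequence of finite-rank projections converges in SOT to the identity), so one must crucially use that the pointwise limit $Tx$ is trapped inside the closure of the ambient collectively compact orbit. Everything else is bookkeeping with relatively compact subsets of the metric space $\clh$.
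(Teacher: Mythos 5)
Your proof is correct. Note that the paper does not prove this statement at all --- it is quoted verbatim from Anselone and Palmer \cite{PALMER} --- so there is no internal proof to compare against; your argument is a valid, self-contained derivation along the standard lines (the key points being that an SOT limit of a collectively compact family lands inside the closed, compact orbit set, hence is a compact operator, and that Minkowski sums and differences of relatively compact subsets of $\clh$ are relatively compact). The order of steps in $(1)\Rightarrow(2)$ is handled correctly: you establish compactness of $T$ first, which is needed before the Minkowski-difference argument can be applied.
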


\begin{proposition}\cite[Proposition 2.2]{PALMER}\label{Palmer 2.2}
Let $T,T_n\in\clb(\clh)$ for $n\in\mathbb{N}$. If $(T_n)$ converges
to $T$ in norm topology and each $T_n-T$ is compact then $(T_n-T)$
is collectively compact.
\end{proposition}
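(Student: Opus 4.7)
The plan is to show directly that the set $\mathcal{K}=\{(T_n-T)x:n\in\mathbb{N},\,\|x\|\leq 1\}$ is totally bounded, since $\mathcal{H}$ is complete and total boundedness is equivalent to relatively compactness in a complete metric space. Writing $S_n=T_n-T$, the hypothesis norm convergence $T_n\to T$ gives $\|S_n\|\to 0$, and each $S_n$ is compact by assumption.

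Given $\varepsilon>0$, I would split $\mathcal{K}$ according to whether the index $n$ is large or small. First I would pick $N$ so that $\|S_n\|<\varepsilon$ for all $n>N$; then every vector $S_nx$ with $n>N$ and $\|x\|\leq 1$ lies within $\varepsilon$ of the origin. For each of the finitely many indices $n\leq N$, compactness of $S_n$ implies that $\{S_nx:\|x\|\leq 1\}$ has compact closure, hence is itself totally bounded, so it admits a finite $\varepsilon$-net $F_n\subset\mathcal{H}$. The finite union $\{0\}\cup F_1\cup\cdots\cup F_N$ then serves as a finite $\varepsilon$-net for all of $\mathcal{K}$, establishing total boundedness.

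There is no real obstacle here: the argument is a clean $\varepsilon/N$-style partition of the index set into a tail (controlled by the norm decay) and a finite head (controlled by compactness of the individual operators). The only thing to be careful about is invoking the correct equivalence, namely that in a complete metric space a subset is relatively compact if and only if it is totally bounded, which justifies reducing the collective compactness condition to producing finite $\varepsilon$-nets.
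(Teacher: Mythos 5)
Your argument is correct: the head/tail split (a finite $\varepsilon$-net for each of the finitely many relatively compact sets $\{S_nx:\|x\|\leq 1\}$, $n\leq N$, plus the single point $0$ covering the uniformly small tail) does establish total boundedness, hence relative compactness, of $\{(T_n-T)x:n\in\mathbb{N},\,\|x\|\leq 1\}$. The paper itself gives no proof of this proposition --- it is quoted from Anselone and Palmer --- but your argument is the standard one for that cited result and contains no gaps.
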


\begin{proposition}\cite[Theorem 3.4]{PALMER}\label{Palmer 2.3}
    Let $T,T_n\in\clb(\clh)$ for $n\in\mathbb{N}$. Suppose $(T_n)$ converges to $T$ in strong operator topology and $(T_n-T)$ is collectively
    compact. Then $(T_n)$ converges to
    $T$ in norm if one of the following holds.
    \begin{enumerate}
        \item $T_n$ is self-adjoint for every $n\in\mathbb{N}$.
        \item $T_n-T$ is normal for every $n\in\mathbb{N}$.
        \item  $\{T_n^*-T^*:\,n\in\mathbb{N}\}$ is a collectively compact set.
    \end{enumerate}
\end{proposition}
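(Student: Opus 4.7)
The plan is to set $S_n := T_n - T$, so that the hypotheses become $S_n \to 0$ in SOT and $\{S_n\}$ collectively compact, and the goal becomes $\|S_n\| \to 0$. My strategy is to first promote each of the three extra hypotheses to the single unified statement that also $S_n^* \to 0$ in SOT, and then to deduce norm convergence from that combination.

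Once $S_n \to 0$ and $S_n^* \to 0$ both hold in SOT and $\{S_n\}$ is collectively compact, I would argue by contradiction. If $\|S_n\| \not\to 0$, then along a subsequence there are unit vectors $x_n$ with $\|S_n x_n\| \geq \varepsilon$ for some $\varepsilon > 0$. Collective compactness of $\{S_n\}$ yields a further subsequence along which $S_n x_n \to y$ in norm, with $\|y\| \geq \varepsilon$. For every fixed $z$, the identity $\langle S_n x_n, z\rangle = \langle x_n, S_n^* z\rangle$, combined with $\|x_n\| \leq 1$ and $\|S_n^* z\| \to 0$, forces $\langle S_n x_n, z\rangle \to 0$. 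Hence $\langle y, z\rangle = 0$ for every $z$, so $y = 0$, contradicting $\|y\| \geq \varepsilon$.

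It remains to verify the promotion in each case. In (1), passing to the SOT limit in $\langle T_n x, y\rangle = \langle x, T_n y\rangle$ shows $T$ is self-adjoint, so $S_n = S_n^*$ and the claim is automatic. In (2), normality of $S_n$ gives $\|S_n^* z\| = \|S_n z\| \to 0$ pointwise. Case (3) is where I expect the main subtlety: $S_n \to 0$ in SOT implies $S_n^* \to 0$ in WOT, and combined with collective compactness of $\{S_n^*\}$, every orbit $\{S_n^* z\}$ is relatively norm compact with $0$ as its only weak cluster point; hence $S_n^* z \to 0$ in norm. This interplay between weak convergence and collective compactness is the step I expect to require the most care; cases (1) and (2) are essentially formal once the unified viewpoint in terms of $S_n$ and $S_n^*$ is in place.
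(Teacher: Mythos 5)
Your argument is correct and complete. The paper itself does not prove this proposition; it is imported verbatim from Anselone--Palmer \cite{PALMER}, so there is no internal proof to compare against. Your reduction to the single statement ``$S_n\to 0$ and $S_n^*\to 0$ in SOT plus collective compactness of $\{S_n\}$ implies $\|S_n\|\to 0$'' is sound: the contradiction step works because $S_{n_k}x_{n_k}$ lives in the relatively norm-compact set $\{S_nx : n\in\mathbb{N},\ \|x\|\le 1\}$, and the weak limit of any norm-convergent subsequence is pinned to $0$ by $\langle S_nx_n,z\rangle=\langle x_n,S_n^*z\rangle$ with $\|S_n^*z\|\to 0$. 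The three promotions are each justified: in (1) the SOT limit of self-adjoint operators is self-adjoint, so $S_n^*=S_n$; in (2) normality gives $\|S_n^*z\|=\|S_nz\|$; and in (3) the orbit $\{S_n^*z\}$ is relatively norm compact and converges weakly to $0$, hence in norm, by the standard subsequence argument. This is essentially the mechanism behind Anselone and Palmer's original Theorem 3.4 (they phrase it via estimates on products such as $\|(T_n-T)T_m\|$, but the underlying compactness-plus-weak-convergence upgrade is the same), and your version is, if anything, more streamlined. No gaps.
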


The following statement can be found in \cite{ANSEL}, just prior to
Theorem 2.1.

\begin{proposition}\label{Ane} \cite{ANSEL} Assume $\{K_{m,n}:n\geq \}$ is collectively compact for each $m\in \mathbb{N}$ and
$\{K_{m,n}\}$ converges in norm to some bounded operator $K_n$,
uniformly in $n$, as $m$ tends to $\infty .$ Then $\{K_n:n\geq 1\}$
is collectively compact.
\end{proposition}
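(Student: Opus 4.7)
The plan is to show that the set $S:=\{K_n x : n\geq 1,\ \|x\|\leq 1\}$ is totally bounded in $\clh$; since $\clh$ is complete, this gives compact closure, which is exactly the assertion that $\{K_n : n\geq 1\}$ is collectively compact.

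Fix $\varepsilon>0$. The strategy is a standard two-step approximation, exploiting the uniform-in-$n$ hypothesis to reduce to a single value of $m$. First, by the uniform norm convergence $K_{m,n}\to K_n$ as $m\to\infty$, I would choose $m_0\in\mathbb{N}$ such that
\[
\|K_{m_0,n}-K_n\|<\varepsilon/2\quad\text{for every }n\geq 1.
\]
Next, by the hypothesis that $\{K_{m_0,n}:n\geq 1\}$ is collectively compact, the set $S_{m_0}:=\{K_{m_0,n}x:n\geq 1,\ \|x\|\leq 1\}$ has compact closure, and hence is totally bounded. Pick a finite $(\varepsilon/2)$-net $\{y_1,\ldots,y_k\}$ for $S_{m_0}$.

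Now for any $K_nx\in S$ with $\|x\|\leq 1$, the triangle inequality gives
\[
\|K_nx-y_i\|\leq \|K_nx-K_{m_0,n}x\|+\|K_{m_0,n}x-y_i\|<\varepsilon/2+\varepsilon/2=\varepsilon
\]
for an appropriate choice of $i\in\{1,\ldots,k\}$. Thus $\{y_1,\ldots,y_k\}$ is a finite $\varepsilon$-net for $S$, proving total boundedness and completing the argument.

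There is no real obstacle here; the only subtlety is to notice that the uniformity of the convergence in $n$ is essential, since otherwise the choice of $m_0$ would depend on $n$ and one could not simultaneously control all the differences $\|K_n x - K_{m_0,n}x\|$ by a single finite net. This is the same mechanism by which a uniform limit of totally bounded sets (or of equicontinuous/compact families) inherits the corresponding finiteness property, and here it is applied at the level of the image sets $S_m$ in $\clh$.
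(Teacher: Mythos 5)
Your argument is correct: the uniform-in-$n$ choice of $m_0$ plus a finite $(\varepsilon/2)$-net for the totally bounded set $S_{m_0}$ yields a finite $\varepsilon$-net for $S$, and total boundedness plus completeness of $\clh$ gives compact closure. The paper itself offers no proof of this proposition (it is quoted from Anselone, just before Theorem 2.1 of that reference), and your total-boundedness argument is exactly the standard one underlying that source, so there is nothing to add.
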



\begin{proposition}\label{proposition linear combination}
    Let $\{T_n:n\in\mathbb{N}\}\subset\clb(\clh)$ be a collectively compact set. If $A,B\in\clb(\clh)$ are compact operators, then $\{AT_n+B: n\in\mathbb{N}\}$ and $\{T_nA+B:n\in\mathbb{N}\}$ are collectively compact sets of operators.
\end{proposition}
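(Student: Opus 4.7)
My plan is to unfold the definition of collective compactness and directly express each of the two target sets as Minkowski sums of relatively compact sets, using only the continuity of $A$, the compactness of $B$, and a scaling invariance of the collectively compact family $\{T_n\}$. Let $\mathcal{S} = \overline{\{T_n x : n \in \mathbb{N},\,\|x\|\leq 1\}}$, which is compact by hypothesis.

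For the family $\{AT_n+B : n \in \mathbb{N}\}$ I would write $(AT_n+B)x = A(T_nx) + Bx$. Then $\{A(T_nx) : n,\,\|x\|\leq 1\}$ is contained in $A(\mathcal{S})$, which is compact as the continuous image of the compact set $\mathcal{S}$ under the bounded operator $A$; and $\{Bx:\|x\|\leq 1\}$ is relatively compact because $B$ is compact. Hence $\{(AT_n+B)x : n,\,\|x\|\leq 1\}$ is contained in the Minkowski sum of two compact sets, which is itself compact (being the image of the compact product under the continuous addition map $\clh\times\clh\to\clh$). This forces $\{(AT_n+B)x : n,\,\|x\|\leq 1\}$ to have compact closure, which is precisely the collective compactness of $\{AT_n+B\}$.

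For the family $\{T_nA+B : n \in \mathbb{N}\}$ I would write $(T_nA+B)x = T_n(Ax) + Bx$. A routine scaling observation shows that for any $r>0$ the set $\{T_ny : n,\,\|y\|\leq r\}$ equals $r\cdot\{T_nz : n,\,\|z\|\leq 1\}$, whose closure is $r\cdot\mathcal{S}$ and is therefore compact. Taking $r=\|A\|$ and using $\|Ax\|\leq\|A\|$ for $\|x\|\leq 1$ gives $\{T_n(Ax) : n,\,\|x\|\leq 1\}\subseteq \|A\|\cdot\mathcal{S}$, a relatively compact set. The translate term $\{Bx:\|x\|\leq 1\}$ is handled exactly as in the previous paragraph, and the same Minkowski-sum argument finishes the proof.

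There is no serious obstacle here: the statement is essentially bookkeeping around the definition. The two small points requiring a moment's care are the scaling invariance of collective compactness (used in the second part) and the preservation of relative compactness under Minkowski sums. It is perhaps worth noting that the argument uses only boundedness of $A$, not compactness, while the compactness of $B$ is genuinely needed for the constant translate term.
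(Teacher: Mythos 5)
Your proof is correct, but it takes a genuinely different route from the paper. The paper reduces the statement to cited machinery of Anselone and Palmer: it invokes their result that $\{2AT_n\}$ and $\{2T_nA\}$ are collectively compact, that a finite union of collectively compact sets is collectively compact (so $\{2AT_n,2B\}$ is), and that the convex hull of a collectively compact set is collectively compact, whence $AT_n+B=\tfrac12(2AT_n)+\tfrac12(2B)$ lies in that hull. You instead unfold the definition and argue from first principles: writing $(AT_n+B)x=A(T_nx)+Bx$, you trap the first summand in $A(\mathcal{S})$ (continuous image of a compact set) and the second in the relatively compact image of the unit ball under $B$, and then observe that the Minkowski sum of two compact sets is compact, so the total set of values has compact closure; for $T_nA+B$ you replace the first step by the scaling observation $\{T_ny:\|y\|\le \|A\|\}=\|A\|\cdot\{T_nz:\|z\|\le 1\}$. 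Your argument is self-contained where the paper's is citation-driven, and it makes visible exactly which hypotheses are used: as you correctly note, only boundedness of $A$ is needed (in both products), while compactness of $B$ is essential (a singleton $\{B\}$ is collectively compact iff $B$ is compact). The paper's approach buys brevity and reuses lemmas already quoted in its preliminaries; yours buys transparency and a marginally more general statement. Both are sound.
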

\begin{proof}
Since $\{T_n\}$ is a collectively compact set,  we have $\{2AT_n\}$
and $\{2T_nA\}$ are collectively compact, by \cite[Proposition
2.3]{PALMER1}. We know that finite union of collectively compact
sets is collectively compact, thus $\{2AT_n,2B\}$ and $\{2T_nA,2B\}$
are collectively compact. By \cite[Proposition 2.1]{PALMER1}, convex
hull of a collectively compact set is collectively compact. Hence
we get that $\{AT_n+B\}$ and $\{T_nA+B\}$ are collectively compact.
\end{proof}

We also need the following classical result from complex analysis.

\begin{theorem}\label{BC}\cite[Theorem 3.1, Page 338]{LANG} (Borel-Caratheodory theorem).
Let $g$ be an analytic function on a disk of radius $R$ centered at
zero in the complex plane. Suppose $0<r<R.$ Then
$$ \sup _{|z|\leq r} |g(z)| \leq \frac{2r}{R-r} \sup _{|z|\leq
R}\mbox{Re }g(z)+\frac{R+r}{R-r}|g(0)|.$$

\end{theorem}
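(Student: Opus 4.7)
The plan is to prove the Borel-Caratheodory bound by the classical two-step route: first handle the normalized case $g(0)=0$ via a Schwarz-lemma argument, then recover the general case by translation.

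First I would reduce to $g(0)=0$. Set $h(z)=g(z)-g(0)$; then $h$ is analytic on the disk of radius $R$, $h(0)=0$, and $\sup_{|z|\leq R}\operatorname{Re} h(z)\leq A+|g(0)|$, where $A:=\sup_{|z|\leq R}\operatorname{Re} g(z)$. Once I have the bound $|h(z)|\leq \frac{2r}{R-r}\bigl(A+|g(0)|\bigr)$ on $|z|\leq r$, adding back $|g(0)|$ and combining the coefficients of $|g(0)|$ gives exactly the stated $\frac{R+r}{R-r}|g(0)|$ term. So the heart of the argument is the normalized case.

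For the normalized case, assume $g(0)=0$ and let $A=\sup_{|z|\leq R}\operatorname{Re} g(z)$. If $A=0$, the maximum principle applied to the harmonic function $\operatorname{Re} g$ forces $\operatorname{Re} g\equiv 0$, hence $g\equiv 0$ (analytic with purely imaginary values is constant), and the inequality is trivial. If $A>0$, introduce the auxiliary function
\[
\varphi(z)=\frac{g(z)}{2A-g(z)},
\]
which is well-defined and analytic on $|z|<R$ because $\operatorname{Re}(2A-g(z))\geq A>0$ there. A direct computation shows
\[
|2A-g(z)|^{2}-|g(z)|^{2}=4A\bigl(A-\operatorname{Re} g(z)\bigr)\geq 0,
\]
so $|\varphi(z)|\leq 1$ on the open disk, and $\varphi(0)=0$. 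The Schwarz lemma then yields $|\varphi(z)|\leq |z|/R$.

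Finally, solving $g=\varphi\,(2A-g)$ for $g$ gives $g=\frac{2A\varphi}{1+\varphi}$, whence
\[
|g(z)|\leq \frac{2A|\varphi(z)|}{1-|\varphi(z)|}\leq \frac{2A|z|/R}{1-|z|/R}=\frac{2A|z|}{R-|z|},
\]
which on $|z|\leq r$ is $\frac{2Ar}{R-r}$. Plugging this bound for $h$ back into the reduction step yields the full inequality. The main thing to be careful about is the coefficient bookkeeping in the reduction step: one must pass from $\sup\operatorname{Re} h \le A + |g(0)|$ to the clean coefficient $\frac{R+r}{R-r}$, which is a short algebraic identity but easy to get wrong. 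The Schwarz-lemma step itself and the algebraic rearrangement $|g|\le 2A|\varphi|/(1-|\varphi|)$ are routine once the conformal-type transform $\varphi$ is chosen.
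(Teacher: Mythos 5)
Your proof is correct: the reduction to $h=g-g(0)$, the degenerate case $A=0$ via the maximum principle, the auxiliary map $\varphi=g/(2A-g)$ with the identity $|2A-g|^{2}-|g|^{2}=4A(A-\operatorname{Re}g)\ge 0$, the Schwarz lemma, and the final bookkeeping $\frac{2r}{R-r}+1=\frac{R+r}{R-r}$ all check out. The paper does not prove this statement -- it only cites Lang -- and your argument is precisely the classical Schwarz-lemma proof given there, so there is nothing further to compare.
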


It is very important for our analysis to keep track of the
multiplicities of eigenvalues of a compact operator $A$. If $\lambda
$ is a non-zero eigenvalue of $A$, then we consider the `algebraic
multiplicity'  of $A$, which is the dimension of $E_{\lambda}(A)$,
the root-space spanned by generalized eigenvectors with eigenvalue
$\lambda :$
$$E_{\lambda }(A):=\mbox{span} \{ x: (A-\lambda )^rx=0, ~~\mbox{for some}~~ r\geq 1\}.$$
We know that this space is finite dimensional. Throughout the article, eigenvalues of
compact operators would always be considered with this
multiplicity. The theorem of Davis \cite{CHAN} is an ingredient for
our proof, and it uses this multiplicity. For self-adjoint compact
operators algebraic multiplicity is same as the geometric
multiplicity (dimension of the corresponding eigenspace).

Note that, if $\lambda$ is an isolated point of $\sigma(A)$, then we
can define the Riesz projection of $A$ corresponding to the isolated
point $\{\lambda\}$ as
\begin{equation*}
    P_{\lambda}:=\frac{1}{2\pi i}\int_{\Gamma}(\delta-A)^{-1}d\delta,
\end{equation*}
where $\Gamma$ is a closed contour around $\{\lambda\}$ separating
$\{\lambda\}$ from $\sigma(A)\backslash \{ \lambda \}$. Then
$\dim(\text{ran}P_{\lambda})$ is  equal to the algebraic
multiplicity of $\lambda$ (see Page 26, \cite{GOH} for more
details.)

We enumerate the non-zero eigenvalues of a compact operator $A$ as,
$\lambda _1, \lambda _2, \ldots $, with $|\lambda _1|\geq |\lambda
_2|\geq \cdots $, where each non-zero eigenvalue is repeated with
its algebraic multiplicity. If this collection is finite, say has
size $M$, then by convention we take $\lambda _i=0$ for $i>M.$  We
will call any such sequence as an {\em eigen-sequence} of $A$. For a compact operator $A$, the multiplicity of $0$ is defined as
the dimension of $\{x: \lim _{n\to \infty}A^nx=0\}.$ It is to be
noted that the multiplicity of $0$ may not be accounted for in the
sequence $(\lambda _1, \lambda _2, \ldots ),$ even if $0$ is an
eigenvalue. For this reason calling $(\lambda _i)$ enumerated as
above as eigen-sequence of $A$ is slight abuse of notation.

\section {Main Results}

Following Nayak \cite{SOY} for any bounded operator $T$ on $\clh$
and non-negative scalar $r$, define the following set:
\begin{align*}
        V(T,r)=\{x\in\clh:\limsup\langle|T^n|x,x\rangle^{1/n}\leq
        r\}.
\end{align*}
We have this definition for arbitrary bounded operators, but we
would be using it mostly for compact operators. Our first result
yields that the set $V(T,r)$ is a subspace for every $r\geq 0$ and
also gives a relation between the eigenspace of $T$ and $V(T,r)$.
\begin{proposition}\label{prop1}
        Let $T\in\clb(\clh)$. Then we have the following.
        \begin{enumerate}
            \item  $V(T,r)$ is a subspace of $\clh$ for $r\geq 0$.
            \item If $\lambda$ is an eigenvalue of $T$, then $\text{ ker}(T-\lambda I)\subseteq V(T,r)$ for every $r\geq|\lambda|.$
           \item   For a self adjoint operator $T$, if $\lambda$ is an isolated point of $\sigma(T)$ with $|\lambda|=r$, then
           ran$P_{\lambda}\subseteq V(T,r+\epsilon)$ for any  $\epsilon>0$, where $P_{\lambda}$ is the Riesz projection of $T$ corresponding to the isolated point ${\lambda}.$
        \end{enumerate}
\end{proposition}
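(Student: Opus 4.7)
The plan is to establish the three parts in order, with the additive closure in (1) being the most delicate step.

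For part (1), scalar closure is immediate: if $x \in V(T,r)$ and $\alpha \in \mathbb{C}$, then $\langle |T^n|(\alpha x), \alpha x\rangle = |\alpha|^{2} \langle |T^n| x, x\rangle$, so raising to the $1/n$ power multiplies by $|\alpha|^{2/n} \to 1$ and the limsup is unchanged. For additive closure, I would use the fact that any positive operator $S$ satisfies the parallelogram-type inequality $\langle S(x+y), x+y\rangle \le 2\langle Sx,x\rangle + 2\langle Sy,y\rangle$ (a consequence of polarization, since $\langle S(x-y),x-y\rangle \ge 0$). Applied with $S = |T^n|$, this gives
\[
    \langle |T^n|(x+y), x+y\rangle^{1/n} \le 2^{1/n}\bigl(\langle |T^n|x,x\rangle + \langle |T^n|y,y\rangle\bigr)^{1/n}.
\]
Combined with the elementary observation that for nonnegative sequences $\limsup (a_n+b_n)^{1/n} \le \max\{\limsup a_n^{1/n},\limsup b_n^{1/n}\}$, this yields $\limsup\langle |T^n|(x+y),x+y\rangle^{1/n}\le r$ whenever $x,y \in V(T,r)$.

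For part (2), suppose $Tx = \lambda x$, so $T^n x = \lambda^n x$. Taking the polar decomposition $T^n = U_n |T^n|$ with $U_n$ a partial isometry that is an isometry on $\overline{\operatorname{ran}}(|T^n|)$, we get $\||T^n| x\| = \|T^n x\| = |\lambda|^n \|x\|$. Cauchy--Schwarz then gives
\[
    \langle |T^n| x, x\rangle \le \||T^n| x\|\,\|x\| = |\lambda|^n \|x\|^2,
\]
hence $\langle |T^n| x, x\rangle^{1/n} \le |\lambda|\,\|x\|^{2/n} \to |\lambda|$, so the limsup is at most $|\lambda| \le r$.

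For part (3), I would invoke the standard fact that for a self-adjoint bounded operator, the Riesz projection associated with an isolated point $\lambda$ of the spectrum coincides with the orthogonal spectral projection $E(\{\lambda\})$, whose range is precisely $\ker(T - \lambda I)$. (This follows from the functional calculus applied to the indicator of $\{\lambda\}$, which is continuous on $\sigma(T)$ when $\lambda$ is isolated.) Consequently $\operatorname{ran}(P_\lambda) = \ker(T - \lambda I)$, and part (2) applied with $r = |\lambda|$ gives $\operatorname{ran}(P_\lambda) \subseteq V(T,|\lambda|) \subseteq V(T,r+\epsilon)$ for any $\epsilon > 0$.

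The only subtle step is the additive closure in (1); the $1/n$-th-root scaling rescues us from needing a true triangle inequality, but one must be careful to apply the positivity-based parallelogram bound rather than attempting a Cauchy--Schwarz type estimate involving $|T^n|^{1/2}$. Parts (2) and (3) are then essentially direct corollaries of polar decomposition and the spectral theorem, respectively.
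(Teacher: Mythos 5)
Your proof is correct, and it mostly tracks the paper's. Part (2) is essentially verbatim the paper's argument ($\||T^n|x\|=\|T^nx\|=|\lambda|^n\|x\|$ followed by Cauchy--Schwarz). For part (3) the paper simply cites Dunford--Schwartz, whereas you give a self-contained reduction to part (2) by identifying the Riesz projection of a self-adjoint operator at an isolated spectral point with the spectral projection onto $\ker(T-\lambda I)$; that is legitimate and arguably more informative, and your conclusion $\mathrm{ran}(P_\lambda)\subseteq V(T,r)\subseteq V(T,r+\epsilon)$ is even slightly stronger than what is claimed. The one genuine divergence is the additive closure in (1): the paper expands $\langle|T^n|(x+y),x+y\rangle$ and controls the cross terms with the Cauchy--Schwarz inequality for the positive form $(u,v)\mapsto\langle|T^n|u,v\rangle$, arriving at $\bigl(\langle|T^n|x,x\rangle^{1/2}+\langle|T^n|y,y\rangle^{1/2}\bigr)^{2}$ and then invoking \cite[Lemma 2.2]{SOY} for the max-of-limsups step; you instead use positivity of $\langle|T^n|(x-y),x-y\rangle$ to get the parallelogram bound $2\langle|T^n|x,x\rangle+2\langle|T^n|y,y\rangle$ and let the $n$-th root absorb the constant $2$. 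Both routes are valid and of comparable length. Your closing caveat that one \emph{must} avoid a Cauchy--Schwarz-type estimate is misplaced, though: the inequality $|\langle Su,v\rangle|\le\langle Su,u\rangle^{1/2}\langle Sv,v\rangle^{1/2}$ for positive $S$ is exactly what the paper uses, and it works without ever introducing $S^{1/2}$ explicitly.
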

    \begin{proof} Let $r\geq 0$.
        \begin{enumerate}
            \item Suppose $x,y\in V(T,r)$ and $\alpha\ne 0$ is a scalar. Then
            \begin{align*}
                \limsup\langle|T^n|\alpha x,\alpha x\rangle^{1/n}=&\limsup|\alpha|^{2/n}\langle |T^n|x,x\rangle^{1/n}\\
                \leq&\lim |\alpha|^{2/n}\limsup\langle|T^n|x,x\rangle^{1/n}\\
                \leq& r.
            \end{align*}
We also have,
            \begin{align*}
            & \limsup\langle|T^n|(x+y),x+y\rangle^{1/n}\\
                \leq&\limsup\left(\langle|T^n|x,x\rangle+\langle|T^n|x,y\rangle+\langle|T^n|y,x\rangle+\langle|T^n|y,y\rangle\right)^{1/n}\\
                \leq&\limsup\left(\langle|T^n|x,x\rangle+2\langle|T^n|x,x\rangle^{1/2}\langle|T^n|y,y\rangle^{1/2}+\langle|T^n|y,y\rangle\right)^{1/n}\\
                =&\limsup\left(\langle|T^n|x,x\rangle^{1/2}+\langle|T^n|y,y\rangle^{1/2}\right)^{2/n}\\
                \leq&\max\left\{\limsup\langle|T^n|x,x\rangle^{1/n},\limsup\langle|T^n|y,y\rangle^{1/n}\right\}\\
                \leq &~ r.
            \end{align*}
The second last inequality follows from \cite[Lemma 2.2]{SOY}. Hence
$V(T,r)$ is a subspace of $\clh$.
            \item Let $x\in\text{ ker }(T-\lambda I).$ Then
            \begin{align*}
                \langle|T^n|x,x\rangle^{1/n}\leq&\||T^n|x\|^{1/n}\|x\|^{1/n}\\
                =&\|T^nx\|^{1/n}\|x\|^{1/n}\\
                =&|\lambda|\|x\|^{2/n}.
            \end{align*}
              Hence $\limsup\langle|T^n|x,x\rangle^{1/n}\leq|\lambda|\leq r$ for every $r\geq|\lambda|$.
            \item This follows directly from \cite[Page 582, 25]{DUN}.\qedhere
        \end{enumerate}
            \end{proof}
For a positive operator $T\in\clb(\clh)$, the renowned spectral
theorem ensures that there exists a projection valued measure $E$
supported on $[0,\|T\|]$ such that
        \begin{equation*}
                T=\int_0^{\|T\|}\lambda\, dE.
            \end{equation*}
            Now, we observe a relationship between the subspaces $V(T,r)$ and the spectral projections
            of $T$.
    \begin{proposition}\label{prop positive operator subspace}
        Let $T\in\clb(\clh)$ be a positive operator. For $r\geq 0$, suppose $\lambda_r$ is the largest spectral value of $T$ less than or equal to $r$. Then
        \begin{align}\label{eqn 1}
            V(T,r)=V(T,\lambda_r)=\text{ ran}(E[0,\lambda_r]),
        \end{align}
        where $E$ is the spectral measure corresponding to the operator $T$.
    \end{proposition}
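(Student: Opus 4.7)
The plan is to reduce everything to the scalar spectral measure associated to $T$ and $x$, and then recognize $\langle T^n x, x\rangle^{1/n}$ as the standard $L^n$-to-$L^\infty$ convergence. Since $T$ is positive we have $|T^n| = T^n$, and by the spectral theorem
\begin{equation*}
  \langle T^n x, x\rangle \;=\; \int_{0}^{\|T\|} \lambda^{n}\, d\mu_x(\lambda), \qquad \mu_x(\Delta) := \langle E(\Delta)x, x\rangle.
\end{equation*}
The measure $\mu_x$ is a finite positive Borel measure supported in $\sigma(T) \cap [0,\|T\|]$, so the first step is to compute $\lim_n \langle T^n x, x\rangle^{1/n}$ and identify it with the quantity
\begin{equation*}
  \alpha_x \;:=\; \sup\bigl\{\lambda \in [0,\|T\|] : \mu_x([0,\lambda)) < \|x\|^{2}\bigr\},
\end{equation*}
i.e.\ the supremum of the support of $\mu_x$ (with the convention $\alpha_x = 0$ when $x = 0$).

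The upper bound $\langle T^n x, x\rangle \leq \alpha_x^n \|x\|^{2}$ is immediate from the spectral integral, so $\limsup \langle T^n x, x\rangle^{1/n} \leq \alpha_x$. For the matching lower bound, fix $\epsilon > 0$; by definition of $\alpha_x$ we have $\mu_x((\alpha_x - \epsilon, \alpha_x]) =: c > 0$, whence
\begin{equation*}
  \int_{0}^{\|T\|} \lambda^{n}\, d\mu_x \;\geq\; (\alpha_x - \epsilon)^{n}\, c,
\end{equation*}
so $\liminf \langle T^n x, x\rangle^{1/n} \geq \alpha_x - \epsilon$. Hence the limit exists and equals $\alpha_x$, and consequently $x \in V(T,r)$ if and only if $\alpha_x \leq r$.

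With that characterization the three-way equality falls out. Since $\mathrm{supp}(\mu_x) \subseteq \sigma(T)$, the number $\alpha_x$ itself lies in $\sigma(T) \cup \{0\}$; combined with $\alpha_x \leq r$ and the definition of $\lambda_r$ this forces $\alpha_x \leq \lambda_r$, so $V(T,r) \subseteq V(T,\lambda_r)$. The reverse inclusion is immediate from $\lambda_r \leq r$, giving $V(T,r) = V(T,\lambda_r)$. Next, $\alpha_x \leq \lambda_r$ is equivalent to $\mathrm{supp}(\mu_x) \subseteq [0,\lambda_r]$, i.e.\ $\mu_x((\lambda_r,\|T\|]) = 0$, which (since $E$ is projection-valued) is equivalent to $E((\lambda_r, \|T\|])x = 0$, hence to $x \in \mathrm{ran}(E[0,\lambda_r])$. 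This yields $V(T,\lambda_r) = \mathrm{ran}(E[0,\lambda_r])$ and completes the chain.

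The only genuinely delicate step is the computation of $\lim_n \langle T^n x, x\rangle^{1/n} = \alpha_x$; everything after that is bookkeeping on supports of spectral measures and the fact that the open interval $(\lambda_r, r]$ contains no spectrum. I do not anticipate obstacles, though one should handle $x = 0$ (and more generally $\alpha_x = 0$) as a trivial case, since the $L^\infty$-type lower bound argument needs $\alpha_x > 0$ to produce a nontrivial mass.
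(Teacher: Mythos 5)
Your proof is correct and takes essentially the same route as the paper: both identify $\limsup_n\langle T^n x,x\rangle^{1/n}$ with the top of the support of the scalar spectral measure $\mu_x$ (the paper phrases it as the smallest $\delta$ with $\langle E(\delta,\|T\|]x,x\rangle=0$), using the same upper bound $\alpha_x^n\|x\|^2$ and the same lower bound from the positive mass of $(\alpha_x-\epsilon,\alpha_x]$, and then translate $x\in V(T,r)$ into the vanishing of $E((\lambda_r,\|T\|])x$. No gaps.
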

    \begin{proof}
        By spectral theorem for positive operators, we know that
        \begin{align*}
            T=\int_{0}^{\|T\|}\lambda\, dE\text{ and }T^n=\int_{0}^{\|T\|}\lambda^ndE,
~~{\mbox        for}~~ n\in\mathbb{N}.
 \end{align*}
Let $x\in\clh$ be a non-zero vector. We claim that $\limsup\langle
T^nx,x\rangle^{1/n}=\delta$ if and only if $\delta$ is the smallest
value in $[0,\|T\|]$ satisfying $\langle
E(\delta,\|T\|]x,x\rangle=0$.

Let $\delta$ be the smallest value in $[0,\|T\|]$ satisfying
$\langle E(\delta,\|T\|]x,x\rangle=0$. Then
    \begin{align*}
        \langle T^nx,x\rangle=&\int_0^{\delta}\lambda^nd \langle Ex,x\rangle\\
        \leq&\delta^n\int_0^{\delta}d\langle Ex,x\rangle\\
        \leq& \delta^n \|x\|^2.
    \end{align*}
    Consequently, $\limsup\langle T^nx,x\rangle^{1/n}\leq \delta.$

Let $\epsilon>0$ be arbitrary. Then $\langle
E(\delta-\epsilon,\delta]x, x\rangle\ne 0$, otherwise $\delta$ is
not the smallest value in $[0,\|T\|]$ satisfying  $\langle
E(\delta,\|T\|]x,x\rangle=0$. Thus
    \begin{align*}
        \langle T^nx,x\rangle=&\int_0^{\delta}\lambda^nd\langle Ex,x\rangle\\
        \geq&\int_{\delta-\epsilon}^{\delta}\lambda^nd\langle Ex,x\rangle\\
        \geq&(\delta-\epsilon)^n\langle E(\delta-\epsilon,\delta]x,x\rangle.
    \end{align*}
    Since $\lim_{n\rightarrow\infty}\langle E(\delta-\epsilon,\delta]x,x\rangle^{1/n}=1$, we have $\limsup\langle T^nx,x\rangle^{1/n}\geq \delta-\epsilon.$ Thus $$\limsup\langle T^nx,x\rangle^{1/n}\geq \delta,$$ and consequently $\limsup\langle T^nx,x\rangle^{1/n}= \delta$. If $\limsup\langle T^nx,x\rangle^{1/n}=\delta$, then arguing on similar lines as above, we get that $\langle E(\delta,\|T\|]x,x\rangle=0$.

    Let $r\geq 0$ and $\lambda_r$ be the largest spectral value of $T$ less than or equal to $r$. By definition of $V(T,r)$, we have $V(T,\lambda_r)\subseteq V(T,r).$ To get the reverse inequality, suppose $x\in V(T,r)$. By the previous claim, we know that $\limsup\langle T^nx,x\rangle^{1/n}=\tilde{\lambda}$, where $\tilde{\lambda}$ is the smallest spectral value of $T$ satisfying $\langle E(\tilde{\lambda},\|T\|]x,x\rangle=0$. Since $x\in V(T,r)$, we have $\tilde{\lambda}\leq r$. Also, $\tilde{\lambda}\leq\lambda_r$. Thus $x\in V(T,\tilde{\lambda})\subseteq V(T,\lambda_r)$. Hence $V(T,r)=V(T, \lambda_r)$.

    We know that $x\in V(T,\lambda_r)$ if and only if $\langle E(\lambda_r,\|T\|]x,x\rangle=0$. Since $E(\lambda_r,\|T\|]$ is an orthogonal projection, we have $\langle E(\lambda_r,\|T\|]x,x\rangle=0$ if and only if $x\in \text{ ran }(E[0,\lambda_r]).$ Hence $V(T,\lambda_r)=\text{ ran}(E[0,\lambda_r]).$
    \end{proof}
    In above results, for a given $T\in\clb(\clh)$  we have studied the spaces $V(T,r)$ for $r\geq 0$. Now, we go the reverse way
 and show that these spaces can be used to identify the operator.
\begin{proposition}\label{prop of uniqueness}
Let $T_1,T_2\in\clb(\clh)$ be two positive operators. If
$V(T_1,r)=V(T_2,r)$ for every $r\geq 0$, then $T_1=T_2$.
    \end{proposition}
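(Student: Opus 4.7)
The plan is to derive the uniqueness directly from the explicit description of $V(T, r)$ as a spectral-projection range already established in Proposition~\ref{prop positive operator subspace}, together with the classical fact that a positive operator is uniquely determined by its spectral measure.

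First I would sharpen Proposition~\ref{prop positive operator subspace} to the cleaner identification $V(T, r) = \operatorname{ran}(E_T[0, r])$ for every $r \geq 0$, where $E_T$ is the spectral measure of $T$. The proposition itself gives $V(T, r) = \operatorname{ran}(E_T[0, \lambda_r])$ with $\lambda_r$ the largest spectral value of $T$ in $[0, r]$; since the interval $(\lambda_r, r]$ contains no points of $\sigma(T)$, one has $E_T(\lambda_r, r] = 0$ and hence $E_T[0, r] = E_T[0, \lambda_r]$. In the degenerate case $[0, r] \cap \sigma(T) = \emptyset$, one has $E_T[0, r] = 0$, and the estimate used in the proof of Proposition~\ref{prop positive operator subspace} forces $V(T, r) = \{0\}$ as well, so the identification still holds.

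With this observation in place, the hypothesis $V(T_1, r) = V(T_2, r)$ yields $\operatorname{ran}(E_{T_1}[0, r]) = \operatorname{ran}(E_{T_2}[0, r])$ for every $r \geq 0$. Since $E_{T_1}[0, r]$ and $E_{T_2}[0, r]$ are \emph{orthogonal} projections onto the same closed subspace, they must coincide as operators, so $E_{T_1}[0, r] = E_{T_2}[0, r]$ for all $r \geq 0$. The collection $\{[0, r] : r \geq 0\}$ is a $\pi$-system generating the Borel $\sigma$-algebra on $[0, \infty)$; consequently, for every pair $x, y \in \clh$, the finite complex measures $B \mapsto \langle E_{T_i}(B) x, y\rangle$ agree on this generating $\pi$-system and therefore on all Borel sets. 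This forces $E_{T_1} = E_{T_2}$, and the spectral representation $T_i = \int \lambda \, dE_{T_i}$ then gives $T_1 = T_2$.

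The only subtle point---hardly an obstacle---is the passage from equality of ranges to equality of the projections themselves, which is immediate because the $E_{T_i}[0, r]$ are orthogonal projections onto a common closed subspace. Everything else is routine measure-theoretic bookkeeping on top of Proposition~\ref{prop positive operator subspace}.
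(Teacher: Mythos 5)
Your proof is correct and takes essentially the same route as the paper's: both reduce the statement, via Proposition~\ref{prop positive operator subspace}, to the fact that a positive operator is determined by its spectral projections $E[0,r]$. Your observation that $E_T[0,r]=E_T[0,\lambda_r]$ (because $\sigma(T)\cap(\lambda_r,r]=\emptyset$) lets you bypass the paper's contradiction argument showing that the largest spectral values $a_1,a_2$ below $r$ coincide, and your $\pi$-system step merely spells out the paper's final assertion that equality of all $E_i[0,r]$ forces $T_1=T_2$; this is a mild streamlining, not a different method.
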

    \begin{proof}
Let $E_1$ and $E_2$ be spectral measures of $T_1, T_2$ respectively.
We assume that $V(T_1,r)=V(T_2,r)$ for every $r\geq 0$. By
Proposition \ref{prop positive operator subspace}, we have
        $$\text{ ran}(E_1[0,a_1])=\text{ ran}(E_2[0,a_2]),$$
        where $a_1$ and $a_2$ are largest spectral values of $T_1$ and $T_2$ less than equal to $r$, respectively.
We have to show that $a_1=a_2$. On the contrary, assume that
$a_1<a_2$. If $a_1<\tilde{r}<a_2$, then $V(T_2,
\tilde{r})=V(T_2,\tilde{a}_2)$, where $\tilde{a}_2<a_2$ is a
spectral value of $T_2$. We have
\begin{align*}
            \text{ ran}(E_2[0,a_2])=\text{ ran}(E_1[0,a_1])=V(T_1,r)=V(T_1, \tilde{r})=V(T_2,\tilde{r})=\text{ ran}(E_2[0,\tilde{a}_2]),
        \end{align*}
which is a contradiction. Thus $a_1=a_2$ and consequently $\text{
ran}(E_1[0,a_1])=\text{ ran}(E_2[0,a_1])$. Hence
$E_1[0,r]=E_1[0,a_1]=E_2[0,a_1]=E_2[0,a_2]=E_2[0,r]$ for every
$r\geq 0$. This implies $T_1=T_2$.
    \end{proof}

In the following, we focus our attention on a   compact operator $A$
on a separable Hilbert space $\mathcal{H}.$ We want to show the norm
convergence of the sequence $\{ |A^n|^{\frac{1}{n}}: n\geq 1\}$. It
is often convenient to assume that $\|A\|<1.$ This is possible in
view of the following observation.

\begin{lemma}\label{scaling}
Let $A\in\clb(\clh)$ be any operator and let $c>0$ be a scalar. Then
the sequence $(|A^n|^{1/n})$ is norm convergent  if and only if $(|
(cA)^n|^{1/n})$ is norm convergent. Moreover, in such a case,
$$\lim_{n\to \infty} |(cA)^n|^{1/n}= |c|\lim _{n\to
\infty}|A^n|^{1/n}.$$
\end{lemma}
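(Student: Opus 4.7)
The plan is to reduce the statement to a pointwise (in $n$) operator identity, after which both the equivalence of convergence and the limit formula are immediate. The only content is really to verify that the positive operator $|(cA)^n|^{1/n}$ equals $c\,|A^n|^{1/n}$ for each $n$; once this holds, the sequences differ only by the constant factor $c$, so one converges in norm exactly when the other does, and the limits are related by multiplication by $c$.

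To carry out the identification, I would start from the obvious fact that $(cA)^n = c^n A^n$ and compute
\begin{equation*}
  \bigl((cA)^n\bigr)^*(cA)^n \;=\; \overline{c}^{\,n} c^{n}\,(A^n)^*A^n \;=\; c^{2n}\,|A^n|^2
\end{equation*}
(using $c>0$, so $|c|=c$ and $\overline{c}^{\,n}c^n = c^{2n}$). Now $c^n|A^n|$ is positive and its square is $c^{2n}|A^n|^2$, so by uniqueness of the positive square root of a positive operator we obtain $|(cA)^n| = c^n\,|A^n|$. Applying the continuous functional calculus (the map $t\mapsto t^{1/n}$ on $[0,\infty)$) to this positive operator, and using that $n$-th roots commute with multiplication by a positive scalar (equivalently, the positive $n$-th root is unique), I get
\begin{equation*}
  |(cA)^n|^{1/n} \;=\; c\,|A^n|^{1/n}.
\end{equation*}

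From this identity, $\||(cA)^n|^{1/n} - cB\| = c\,\||A^n|^{1/n} - B\|$ for any bounded operator $B$, which shows simultaneously that the two sequences are norm Cauchy together and that their limits (when they exist) satisfy $\lim |(cA)^n|^{1/n} = c \lim |A^n|^{1/n}$. There is no real obstacle: the only place care is needed is in not confusing $|(cA)^n|$ with $c|A^n|$ via a handwave — it must be justified by uniqueness of positive square roots, since one cannot in general pull absolute values through a product of operators. No compactness or collective-compactness input from the earlier preliminaries is required for this lemma; it is a purely algebraic observation that lets us assume $\|A\|<1$ (by replacing $A$ with $cA$ for sufficiently small $c>0$) in all subsequent convergence arguments.
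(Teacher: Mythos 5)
Your proposal is correct and follows the same route as the paper, which simply asserts the identity $|(cA)^n|^{1/n}=c\,|A^n|^{1/n}$ as clear; you supply the justification (uniqueness of positive square roots and functional calculus) that the paper omits. Nothing further is needed.
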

\begin{proof}
This is clear, as $|(cA)^n|^{1/n}=|c| |A^n|^{1/n}$, for $n\in\mathbb{N}$.
\end{proof}

Consider
    \begin{align*}
        \zeta _A: \text{ set of all norm limit points of }(|A^n|^{1/n}).
    \end{align*}

Our first task is to show that $\zeta _A$ is non-empty and consists
of positive compact operators. This is accomplished by making use of
the notion of collective compactness.  Subsequently, we will show
that all limit points of $ \{|A^n|^{1/n}:n\geq 1\}$ are equal, that
is, $\zeta _A $ is singleton and the whole sequence converges to
this limit point.

\begin{theorem}\label{collectively compact}
Let $A\in\clb(\clh)$ be a compact operator with $\|A\|<1$. Then the
set $\{|A^n|^{1/n}:n\in\mathbb{N}\}$ is collectively compact.
    \end{theorem}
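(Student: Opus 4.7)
Plan: The strategy is to invoke Anselone's Proposition \ref{Ane}. Namely, we produce, for each $m \in \mathbb{N}$, a family $\{K_{m,n}: n \in \mathbb{N}\}$ which is collectively compact (for each fixed $m$) and for which $K_{m,n}$ converges in norm to $|A^n|^{1/n}$ uniformly in $n$ as $m \to \infty$; Proposition \ref{Ane} then yields collective compactness of $\{|A^n|^{1/n}: n\}$.

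The approximants come from functional calculus. Because $\sigma(|A^n|^{1/n}) \subseteq [0, \|A^n\|^{1/n}] \subseteq [0, \|A\|]$ for every $n$, the Stone-Weierstrass theorem provides polynomials $p_m$ with $p_m(0) = 0$ and $\sup_{t \in [0,\|A\|]} |p_m(t) - t| \to 0$. Setting $K_{m,n} := p_m(|A^n|^{1/n})$, the continuous functional calculus gives
\[
\bigl\| K_{m,n} - |A^n|^{1/n} \bigr\| \;\leq\; \sup_{t \in [0,\|A\|]} |p_m(t) - t| \;\longrightarrow\; 0,
\]
uniformly in $n$ as $m \to \infty$. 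This verifies the uniform convergence hypothesis of Proposition \ref{Ane}.

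The main obstacle is to establish the collective compactness of $\{K_{m,n}: n\}$ for each fixed $m$. Writing $p_m(t) = t\,q_m(t)$, one has $K_{m,n} = \sum_{j=0}^{d_m} a_j |A^n|^{(j+1)/n}$, a finite linear combination of operators of the form $|A^n|^{k/n}$ with $k \geq 1$; since collective compactness is preserved under finite linear combinations (Minkowski sums of relatively compact sets being relatively compact), it suffices to show $\{|A^n|^{k/n}: n\}$ is collectively compact for each $k \geq 1$. The case $k \geq n$ is easy, since $|A^n|^{k/n} = |A^n| \cdot |A^n|^{(k-n)/n}$ contains the compact factor $|A^n|$, of norm $\leq \|A\|^n$. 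The hard case is $k < n$: a naive Holder-McCarthy bound (Proposition \ref{thm inequ furuta}) combined with the singular-value inequality $\|A^n(I - P_K)\| \leq \|A\|^{n-1}\,s_{K+1}(A)$ (for $P_K$ the projection onto the top $K$ right singular vectors of $A$) only yields $\|(I - P_K)|A^n|^{1/n}\| \leq (\|A\|^{n-1}\,s_{K+1}(A))^{1/n}$, which fails to be uniformly small in $n$ because of the $1/n$ exponent. To circumvent this, the Borel-Caratheodory theorem (Theorem \ref{BC}) is applied to an operator-valued analytic function built from the resolvent $(I - zA)^{-1}$, which is analytic on the disc $|z| < 1/\|A\|$ strictly larger than the closed unit disc since $\|A\| < 1$; controlling that function by its real part on a larger circle produces estimates uniform in $n$. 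Combined with Nayak's finite-dimensional theorem applied to finite-rank truncations of $A$, this delivers the uniform tail bounds required for collective compactness of $\{K_{m,n}: n\}$, and Proposition \ref{Ane} then completes the proof.
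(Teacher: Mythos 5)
Your overall scheme --- build approximants $K_{m,n}$ and verify the two hypotheses of Proposition \ref{Ane} --- matches the paper's, but your choice of approximants makes the argument circular. You set $K_{m,n}=p_m(|A^n|^{1/n})$ with $p_m(0)=0$, so that $K_{m,n}$ is a finite linear combination of operators $|A^n|^{k/n}$ with $k\geq 1$; the uniform convergence $K_{m,n}\to|A^n|^{1/n}$ is then immediate from functional calculus, but the collective compactness of $\{K_{m,n}:n\}$ for fixed $m$ reduces, as you yourself note, to the collective compactness of $\{|A^n|^{k/n}:n\}$ for each $k\geq 1$ --- and for $k=1$ this is \emph{exactly} the statement being proved. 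Even if you arrange for $p_m$ to have no linear term, the families $\{|A^n|^{k/n}:n\}=\{(A^{*n}A^n)^{k/2n}:n\}$ with $k\geq 2$ carry the identical difficulty: fractional powers with exponent tending to $0$, whose tail singular values $s_j(A^n)^{k/n}\to|\lambda_j|^k$ do not decay uniformly in $n$ under any naive estimate. You correctly flag this as the hard case, but your final paragraph only names tools (Borel--Caratheodory applied to an unspecified resolvent-built analytic function, Nayak's theorem on truncations) without constructing the function, stating the estimate it yields, or explaining how the pieces combine. That is precisely where the content of the theorem lies, so there is a genuine gap.

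The paper escapes the circle by approximating the \emph{operator} rather than the \emph{function}: it takes finite-rank corner truncations $A_m\to A$ in norm and sets $K_{m,n}=|A_m^n|^{1/n}$. Collective compactness of $\{|A_m^n|^{1/n}:n\}$ for fixed $m$ is then cheap --- by Nayak's matrix theorem this sequence converges in norm to a finite-rank positive operator $B_m$, and Propositions \ref{Palmer 2.2} and \ref{Palmer 2.1} apply --- while all the analytic work goes into the uniform-in-$n$ convergence $\||A_m^n|^{1/n}-|A^n|^{1/n}\|\to 0$ as $m\to\infty$. There Stone--Weierstrass is used to approximate $z\mapsto|z|^{1/2n}$ on a disc of radius $2$ by a symmetrized trigonometric polynomial $q_n=g_n+\overline{g_n}$, and Borel--Caratheodory bounds $\|g_n\|_{\infty,\mathbb{D}}$ by a constant independent of $n$; combined with the resolvent estimate $\|p(A_m^{*n}A_m^n)-p(A^{*n}A^n)\|\leq \|p\|_{\infty,\mathbb{D}}(1-\|A\|)^{-2}\|A_m^{*n}A_m^n-A^{*n}A^n\|$ and the fact that $A_m^n\to A^n$ uniformly in $n$ (which uses $\|A\|<1$), this gives the required uniform estimate. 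To rescue your plan you would essentially have to replace your $K_{m,n}$ by the paper's.
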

    \begin{proof}
    Let $\{e_i:i\in\mathbb{N}\}$ be an orthonormal basis for the Hilbert space $\clh$. Then $A$ admits an infinite matrix representation $(a_{ij})$ with respect to the basis. Define
    \[\tilde{A}_m=(a_{ij})_{i,j=1}^m\text{ and }A_m=\begin{bmatrix}
        \tilde{A}_m&0\\
        0&0
    \end{bmatrix}.\]
    As in the proof of Theorem $2$ of \cite{YAMA2}, we have $(A_m)$ converges to $A$ in the norm topology.
Then it follows  that $(A_m^n)$ converges in norm topology to $A^n$
for every $n.$  We claim that this convergence takes place uniformly
in $n$.

Since $\|A\|<1$, we have $\|A_m\|<1.$ Using analytic functional
calculus, we know that
    \begin{align*}
        \|A_m^n-A^n\|=&\frac{1}{2\pi i}\left\|\int_{\mathbb{T}}\lambda^n\left[(\lambda-A_m)^{-1}-(\lambda-A)^{-1}\right]d\lambda\right\|\\
        \leq&\underset{\lambda\in \mathbb {T} }{\sup}|\lambda|^n\|(\lambda-A_m)^{-1}-(\lambda-A)^{-1}\|\\
        =&\underset{\lambda\in \mathbb{T}
        }{\sup}\|(\lambda-A_m)^{-1}\|\|A_m-A\|\|(\lambda-A)^{-1}\| ,
    \end{align*}
where $\mathbb{T} $ is the unit circle centered at the origin with
radius $1$. We observe  that $\|A_m\|\leq\|A\|$ and consequently,
    \begin{align*}
        \|(\lambda-A_m)^{-1}\|=\|(1-\lambda^{-1}A_m)^{-1}\|\leq\frac{1}{1-\|A_m\|}\leq\frac{1}{1-\|A\|},
    \end{align*}
    for every $\lambda\in \mathbb{T} $. Now, we have
    $$\|A_m^n-A^n\|\leq\frac{1}{(1-\|A\|)^2}\|A_m-A\|\text{ for every }n\in\mathbb{N},$$
    which implies that $(A_m^n)$ converges to $A^n$ in norm topology, uniformly in
    $n$, as $m$ tends to $\infty .$ This proves our claim.

Now for $n\in \mathbb{N}$,
\begin{align*}
        \|A_m^{*n}A_m^n-A^{*n}A^n\|\leq&\|A_m^{*n}A_m^n-A^{*n}A_m^n\|+\|A^{*n}A_m^n-A^{*n}A^n\|\\
        \leq&\|A_m^{*n}-{A}^{*n}\|+\|A_m^n-A^n\|\\
        \leq&2\|A_m^n-A^n\|.
    \end{align*}
From the  inequality above, we conclude that
$\|A_m^{*n}A_m^n-A^{*n}A^n\|$ converges to $0$ uniformly in $n$.
Consider any analytic polynomial $p$. Using analytic
functional calculus, we have
    \begin{align*}
         \|p(A_m^{*n}A_m^n)-p(A^{*n}A^n)\|=&\frac{1}{2\pi i}\left\|\int_{\mathbb{T} }p(\lambda)[(\lambda-A_m^{*n}A_m^n)^{-1}-(\lambda-A^{*n}A^n)^{-1}]d\lambda\right\|\\
         \leq&\|p\|_{\infty, \mathbb{D}}\|(\lambda-A_m^{*n}A_m^n)^{-1}\|\|A_m^{*n}A_m^n-A^{*n}A^n\|\|(\lambda-A^{*n}A^n)^{-1}\|\\
         \leq&\frac{\|p\|_{\infty ,
         \mathbb{D}}}{(1-\|A\|)^2}\|A_m^{*n}A_m^n-A^{*n}A^n\|,
    \end{align*}
where $\|p\|_{\infty , \mathbb{D}}:=\sup \{|p(z)|: |z|<1\}.$ In the
above computations, the last inequality follows from the fact that
$\|{A^*}^nA^n\|\leq\|A\|^{2n}\leq\|A\|$ for $n\in\mathbb{N}$ and
using the following inequality.
    \begin{align*}
        \|(\lambda-{A^*}^nA^n)^{-1}\|=\|(1-\lambda^{-1} {A^*}^nA^n)^{-1}\|\leq\frac{1}{1-\|{A^*}^nA^n\|}\leq\frac{1}{1-\|A\|},\text{ for }\lambda\in\mathbb{T}.
    \end{align*}
A similar calculation implies that
$\|(\lambda-A_m^{*n}A_m^n)^{-1}\|\leq\frac{1}{1-\|A\|},$ for
$\lambda\in\mathbb{T}$. Hence,
\begin{equation}\label{bound} \|p(A_m^{*n}A_m^n)-p(A^{*n}A^n)\|\leq
\frac{\|p\|_{\infty ,
\mathbb{D}}}{(1-\|A\|)^2}\|A_m^{*n}A_m^n-A^{*n}A^n\|.
\end{equation}


Now, we claim that
$\left\|(A_m^{*n}A_m^n)^{1/2n}-(A^{*n}A^n)^{1/2n}\right\|$ converges
to $0$ uniformly in $n$.  To prove this, fix $0<\epsilon  <1$. We
define $f_n(z)=|z|^{1/2n}$ for $z\in 2\overline{\mathbb{D}},$ where
$2\overline{\mathbb{D}}$ is the closed unit disc of radius $2$,
around the origin. Clearly, it is a continuous function on a compact
Hausdorff space.  Then the Stone-Weierstrass approximation theorem
implies that there exists a trigonometric polynomial,  $p_n$ such
that $\|f_n-p_n\|_{\infty,2\mathbb{D}}:=\sup\{|(f_n-p_n)(z)|:
|z|<2\} <\frac{\epsilon}{4}$.
 Note that $p_n$ is a trigonometric polynomial in
$z$, means that it is of the form, $a_n(z)+b_n(\bar{z})$, where
$a_n, b_n$ are usual polynomials. Define $p_n^{(j)}$ for $j=1,2,3,
4$ by
$$p_n^{(1)}(z)=p_n(z);\, p_n^{(2)}(z)=p_n(\bar{z});\,
p_n^{(3)}(z)=\overline {p_n(z)};\,
p_n^{(4)}(z)=\overline{p_n(\bar{z})}.$$  Observe that for $z\in
2\overline{\mathbb{D}}$,
$$f_n(z)=f_n(\bar{z})=\overline {f_n(z)}= \overline{f_n(\bar{z})}.$$
 It follows that
$\|f_n-p_n\|_{\infty,2\mathbb{D}}=\|f_n-p_n^{(j)}\|_{\infty,2\mathbb{D}}$
for $1\leq j\leq 4$. Consequently, taking $q_n= \frac{1}{4}\sum
_{j=1}^4p_n^{(j)}$,
$$\|f_n-q_n\|_{\infty,2\mathbb{D}}=\|f_n-p_n\|_{\infty,2\mathbb{D}}< \frac{\epsilon }{4}.$$
It is clear that $\|f_n\|_{\infty , 2\mathbb{D}}=2.$ Hence
$$\|q_n\|_{\infty , 2\mathbb{D}}<2+\frac{\epsilon}{4}.$$

From the definition of $q_n$, it is clear that it is of the form
$q_n(z)= g_n(z)+\overline{g_n(z)}$, where $g_n$ is a polynomial with
real coefficients. Then by Borel-Caratheodory Theorem (see Theorem
\ref{BC}) for $r=1$ and $R=2$,
$$\sup _{|z|\leq 1}|g_n(z)|\leq 2\sup_{|z|\leq 2}q_n(z)+
3|g_n(0)|.$$ Observe that
$q_n(0)=g_n(0)+\overline{g_n(0)}=2g_n(0)$. Hence
$|g_n(0)|=\frac{1}{2}|q_n(0)|\leq
\frac{1}{2}\|q_n\|_{\infty,2\mathbb{D}}<1+\frac{\epsilon}{8}.$
Consequently, we have   $\|g_n\|_{\infty,\mathbb{D}}<
2(2+\frac{\epsilon }{4})+ 3(1+\frac{\epsilon}{8})<8$.

Note that $A_m^{*n}A_m^n$ and $A^{*n}A^n$ are self-adjoint operators
for $m,n\in\mathbb{N}$. By continuous functional calculus for
self-adjoint operators, we have
$q_n(A_m^{*n}A_m^n)=2g_n(A_m^{*n}A_m^n)$ and
$q_n(A^{*n}A^n)=2g_n(A^{*n}A^n)$. Then by equation (\ref{bound}),
    \begin{align*}
        \left\|(A_m^{*n}A_m^n)^{1/2n}-(A^{*n}A^n)^{1/2n}\right\|=&\|f_n(A_m^{*n}A_m^n)-f_n(A^{*n}A^n)\|\\
        \leq& \|f_n(A_m^{*n}A_m^n)-q_n(A_m^{*n}A_m^n)\|+2\|g_n(A_m^{*n}A_m^n)-g_n(A^{*n}A^n)\|\\
        &+\|q_n(A^{*n}A^n)-f_n(A^{*n}A^n)\|\\
        \leq &2\|f_n-q_n\|_{\infty,\mathbb{D}}+2\frac{\|g_n\|_{\infty,
        \mathbb{D}}}{(1-\|A\|)^2}\|A_m^{*n}A_m^n-A^{*n}A^n\|\\
        \leq & 2. \frac{\epsilon}{4}+ \frac{16}{(1-\|A\|)^2}\|A_m^{*n}A_m^n-A^{*n}A^n\|.
    \end{align*}
Thus $\|(A_m^{*n}A_m^n)^{1/2n}-(A^{*n}A^n)^{1/2n}\|$ converges to
$0$ uniformly in $n$. In other words, $\{ |A_m^n|^{\frac{1}{n}}\}$
converges to $|A^n|^{\frac{1}{n}}$, uniformly in $n$ as $m$ tends to
$\infty .$

By Nayak's theorem for matrices  \cite{SOY}, we know that
$(|A_m^n|^{1/n})$ converges to some positive finite rank operator
$B_m$. Consequently, Proposition \ref{Palmer 2.2} gives that
$\{|A_m^n|^{1/n}-B_m:n\in\mathbb{N}\}$ is collectively compact for
each $m$. Then Proposition \ref{Palmer 2.1} implies that
$\{|A_m^n|^{1/n}:n\in\mathbb{N}\}$ is collectively compact for each
$m$ and hence Proposition \ref{Ane} implies that
$\{|A^n|^{1/n}:n\in\mathbb{N}\}$ is collectively compact.
\end{proof}

\begin{lemma}\label{prop limit points of comapct}
Let $A\in\clb(\clh)$ be a compact operator with $\|A\|< 1$. Then the
set $\zeta _A$ of norm limit points of $(|A^n|^{1/n})$ is non-empty
and consists of positive compact operators.
\end{lemma}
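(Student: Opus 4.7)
The plan is to use the collective compactness established in Theorem \ref{collectively compact}, together with the self-adjointness of each $|A^n|^{1/n}$, to extract a norm-convergent subsequence and then identify its limit as a positive compact operator.

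Observe first that $\||A^n|^{1/n}\| = \|A^n\|^{1/n} \leq \|A\| < 1$, so the sequence is uniformly norm-bounded. To produce a subsequence converging in the strong operator topology, I would fix a countable dense subset $\{x_k\} \subset \clh$. By collective compactness, the set $\{|A^n|^{1/n}x_k : n \in \mathbb{N}\}$ is relatively norm-compact in $\clh$ for each $k$. A standard diagonal argument then yields a subsequence $(n_j)$ along which $|A^{n_j}|^{1/n_j}x_k$ converges for every $k$, and uniform boundedness combined with density of $\{x_k\}$ extends this to SOT convergence of $|A^{n_j}|^{1/n_j}$ to some $B \in \clb(\clh)$.

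Next I would invoke the Anselone--Palmer machinery recalled in Section 2. Since $(|A^{n_j}|^{1/n_j})$ remains a collectively compact sequence and converges to $B$ in SOT, Proposition \ref{Palmer 2.1} forces $B$ to be compact and the family $(|A^{n_j}|^{1/n_j} - B)$ to be collectively compact. Because each $|A^{n_j}|^{1/n_j}$ is self-adjoint, Proposition \ref{Palmer 2.3}(1) promotes this SOT convergence to norm convergence. Positivity of $B$ is automatic from being a norm limit of positive operators. This shows that $\zeta_A$ is non-empty, and applying the identical argument to any subsequence whose norm limit defines an element of $\zeta_A$ shows that every member of $\zeta_A$ is a positive compact operator.

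The one step requiring attention is the SOT extraction, but it amounts to a Bolzano--Weierstrass-style diagonal argument on the countable dense set together with the uniform bound $\||A^n|^{1/n}\| < 1$; given Theorem \ref{collectively compact}, no real obstacle arises. The substantive work has already been done in establishing collective compactness; this lemma is essentially a clean harvesting of that result via the self-adjoint case of Proposition \ref{Palmer 2.3}.
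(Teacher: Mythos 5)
Your proof is correct, and it reaches the SOT-convergent subsequence by a genuinely different route than the paper. The paper first extracts a \emph{WOT}-convergent subsequence using the sequential WOT compactness of the unit ball of $\clb(\clh)$, and then has to work to upgrade WOT to SOT: it multiplies by the compact operator $|A|$ (so that WOT convergence becomes SOT convergence of $|A|\,|A^{n_k}|^{1/n_k}$), applies Propositions \ref{proposition linear combination} and \ref{Palmer 2.3}(3) to get norm convergence of $|A^{n_k}|^{1/n_k}|A|$ to $B|A|$, and finally uses the decomposition $\clh=\ker(|A|)\oplus\overline{\mathrm{ran}}(|A|)$ together with $\ker(|A|)\subseteq\ker(B)$ to conclude SOT convergence of $|A^{n_k}|^{1/n_k}$ itself. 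You instead exploit collective compactness directly: for each member of a countable dense set the orbit $\{|A^n|^{1/n}x_k\}$ is relatively norm-compact, so a diagonal argument plus the uniform bound $\||A^n|^{1/n}\|\leq\|A\|$ produces an SOT-convergent subsequence outright. From that point the two arguments coincide (Proposition \ref{Palmer 2.1} gives compactness of the limit and collective compactness of the differences; Proposition \ref{Palmer 2.3}, via self-adjointness, upgrades to norm convergence; positivity is inherited from the approximants). Your extraction step is shorter and avoids both the WOT compactness citation and the $|A|$-multiplication detour; the paper's version, on the other hand, needs no density/diagonalization bookkeeping and leans only on the stated Anselone--Palmer propositions. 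Both uses of separability are legitimate, since $\clh$ is assumed separable throughout.
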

    \begin{proof} Let $A\in\clb(\clh)$ be a compact operator. Then
$\||A^n|^{1/n}\|\leq\|A\|$ for $n\in\mathbb{N}$. By \cite[Theorem
5.1.3, Page 306]{KADI} $(|A^n|^{1/n})$ has a WOT convergent
subsequence, say $(|A^{n_k}|^{1/n_k})$ converging to $B$ in WOT.

We claim that $(|A^{n_k}|^{1/n_k})$ converges to $B$ in norm. Since
$|A^{n_k}|^{1/n_k}$ converges to $B$ in WOT, we have
$|A|(|A^{n_k}|^{1/n_k})$ converges to $|A|B$ in SOT by \cite[Page
81]{CON}.  By Theorem \ref{collectively compact}, we know that $\{|A^{n_k}|^{\frac{1}{n_k}}\}$ is a collectively compact set, 
thus Proposition \ref{proposition linear combination}  implies that
$\{ |A||A^{n_k}|^{\frac{1}{n_k}}-|A|B\}$ and
$\{|A^{n_k}|^{\frac{1}{n_k}}|A|-B|A|\}$ are collectively compact
sets of operators. This along with the fact that
$\{|A||A^{n_k}|^{\frac{1}{n_k}}\}$ converges to $|A|B$ in SOT
implies that $\{|A||A^{n_k}|^{\frac{1}{n_k}}\}$ converges to $|A|B$
in norm, by part (3) of Proposition \ref{Palmer 2.3}. Taking the
adjoint, we get that $\{|A^{n_k}|^{\frac{1}{n_k}}|A|\}$ converges to
$B|A|$ in norm. 

        Let $x\in \text{ker}(|A|)$. Then $|A^{n_k}|^{1/n_k}x=0$ for every $k\in\mathbb{N}$ and consequently $\langle Bx,x\rangle=0$. Since $B$ is a positive operator, we have $Bx=0$. As a result, we get that $\text{ker}(|A|)\subseteq \text{ker}(B)$.

        Let $y\in\clh$, and $y=y_1+y_2$, where $y_1\in \text{ ker}(|A|)$ and $y_2\in \overline{\text{ran}}(|A|)$. For $\epsilon>0$, there exist $z_2\in\clh$ such that $\|y_2-|A|z_2\|<\frac{\epsilon}{3}.$ Then
        \begin{align*}
            \||A^{n_k}|^{1/n_k}y-By\|=&\||A^{n_k}|^{1/n_k}(y_1+y_2)-B(y_1+y_2)\|\\
            =&\||A^{n_k}|^{1/n_k}y_2-By_2\|\\
            \leq&\||A^{n_k}|^{1/{n_k}}y_2-|A^{n_k}|^{1/{n_k}}|A|z_2\|+\||A^{n_k}|^{1/{n_k}}|A|z_2-B|A|z_2\|\\
            +&\|B|A|z_2-By_2\|\\
            =&\frac{2\epsilon}{3}+\||A^{n_k}|^{1/n_k}|A|z_2-B|A|z_2\|.
        \end{align*}
        We know that $(|A^{n_k}|^{1/n_k}|A|)$ converges to $B|A|$ in norm, thus by above calculation, we get $|A^{n_k}|^{1/n_k}$ converges to $B$ in SOT.

By Theorem \ref{collectively compact}, we know that
$\{|A^{n_k}|^{1/{n_k}}\}$ is collectively compact and Proposition
\ref{Palmer 2.1} (2),  implies that $\{|A^{n_k}|^{1/{n_k}}-B\}$ is
collectively compact and $B$ is compact. Now, Proposition
\ref{Palmer 2.3}
 gives that $\{|A^{n_k}|^{1/{n_k}}\}$ converges to $B$ in
the norm topology.
    \end{proof}



\begin{lemma}\label{lemma containment of subspaces}
Let $A\in\clb(\clh)$ be a compact operator with $\|A\|<1$. Then for
 $B\in \zeta _A,$
\begin{align*}
            V(A,r)\subseteq V(B,r)\text{ for }r\geq 0.
        \end{align*}
    \end{lemma}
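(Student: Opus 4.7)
The plan is to fix $B \in \zeta_A$, so that some subsequence $(|A^{n_k}|^{1/n_k})$ converges to $B$ in norm, fix an arbitrary $x \in V(A,r)$, and verify that $\limsup_n \langle B^n x, x\rangle^{1/n} \leq r$. Since $B$ is positive by Lemma \ref{prop limit points of comapct}, $|B^n| = B^n$, so this inequality is precisely the membership $x \in V(B,r)$. By the scaling computation in part (1) of Proposition \ref{prop1}, I may assume $\|x\| = 1$.

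The main mechanism is to transfer information about $|A^{n_k}|^{1/n_k}$ to information about the powers $B^n$ via the Holder-McCarthy inequality. Fix $n \in \mathbb{N}$. Norm convergence $|A^{n_k}|^{1/n_k} \to B$ implies norm convergence of the $n$-th powers, so $|A^{n_k}|^{n/n_k} \to B^n$ as $k \to \infty$. Once $k$ is large enough that $n \leq n_k$, applying Proposition \ref{thm inequ furuta} to the positive operator $S = |A^{n_k}|$ with exponent $n/n_k \in (0,1]$ yields
\[
\langle |A^{n_k}|^{n/n_k} x, x\rangle \;\leq\; \langle |A^{n_k}| x, x\rangle^{n/n_k}.
\]
Passing to the limit in $k$ on the left and to $\limsup$ on the right gives
\[
\langle B^n x, x\rangle \;\leq\; \limsup_{k} \langle |A^{n_k}| x, x\rangle^{n/n_k} \;=\; \Bigl( \limsup_{k} \langle |A^{n_k}| x, x\rangle^{1/n_k} \Bigr)^{\!n},
\]
where the final equality uses that $t \mapsto t^n$ is continuous and monotone on the bounded interval $[0, \|A\|]$ containing the sequence $\langle |A^{n_k}|x,x\rangle^{1/n_k}$.

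To close, note that $(n_k)$ is a subsequence of $\mathbb{N}$, so
\[
\limsup_{k} \langle |A^{n_k}| x, x\rangle^{1/n_k} \;\leq\; \limsup_{n} \langle |A^n| x, x\rangle^{1/n} \;\leq\; r,
\]
the last bound being exactly the hypothesis $x \in V(A,r)$. Therefore $\langle B^n x, x\rangle \leq r^n$, whence $\langle B^n x, x\rangle^{1/n} \leq r$ for every $n$, and $x \in V(B,r)$ as required. The only point needing small care is the interchange of the $n$-th power with $\limsup$, which is a standard fact about continuous monotone maps applied to bounded sequences; otherwise the argument is a direct one-line application of Holder-McCarthy followed by passage to the limit.
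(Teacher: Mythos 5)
Your argument is correct and follows essentially the same route as the paper's proof: both fix a norm-convergent subsequence $(|A^{n_k}|^{1/n_k})\to B$, apply the Holder--McCarthy inequality (Proposition \ref{thm inequ furuta}) to $|A^{n_k}|$ with exponent $p/n_k$, and pass to the limit to bound $\langle B^p x,x\rangle$ by $r^p$; the only cosmetic difference is that you normalize $\|x\|=1$ at the outset while the paper carries the factors $\|x\|^{2}/\|x\|^{2p/n_k}$ through the estimate.
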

    \begin{proof}
As $B$ is a limit point of $(|A^n|^{1/n})$, there exists a
subsequence, say $(|A^{n_k}|^{1/n_k})$ converging to $B$. Suppose
$0\neq x\in V(A,r)$ for $r\geq 0$ and $p\in\mathbb{N}$.
Then
        \begin{align*}
            \langle B^px,x\rangle=\lim_k \langle |A^{n_k}|^{p/n_k}x,x\rangle.
        \end{align*}
        If we choose $n_k>p$, then by Proposition \ref{thm inequ furuta}, we have
            \begin{align*}
            \langle B^px,x\rangle\leq&\lim_k \langle |A^{n_k}|x,x\rangle^{p/n_k}\frac{\|x\|^2}{\|x\|^{\frac{2p}{n_k}}}\\
            \leq&\limsup_n\langle|A^n|x,x\rangle^{p/n}\|x\|^2\\
            \leq& r^p\|x\|^2.
        \end{align*}
        Thus $\limsup\langle B^px,x\rangle^{1/p}\leq r.$ Hence $x\in V(B,r).$
    \end{proof}
    In the following we see that the spectrum of operators in $\zeta _A$ is
determined by the spectrum of $A$.
    \begin{lemma}\label{lemma spectrum of limit point}
        Let $A\in\clb(\clh)$ be a compact operator with $\|A\|<1$, and let $B\in\zeta_A$. Then
        \begin{align*}
            \sigma(B)=\{|\lambda|:\lambda\in\sigma(A)\}.
        \end{align*}
  For $\lambda \neq 0$,    $\dim\text{ker}(B-|\lambda|I)$ 
        is equal to the sum of the algebraic multiplicity of $\eta\in\sigma(A)$ for which $|\eta|=|\lambda|$.
    \end{lemma}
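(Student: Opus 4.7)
The plan is to combine Davis's extension of Yamamoto's theorem to compact operators (cited in the introduction) with the min--max continuity of eigenvalues of positive compact operators under norm convergence. Since $B\in\zeta_A$ is a limit point, I extract a subsequence $(|A^{n_k}|^{1/n_k})$ converging to $B$ in norm; by Lemma~\ref{prop limit points of comapct}, $B$ is positive and compact. Because $|A^n|$ is positive, the singular values $s_1^{(n)}\geq s_2^{(n)}\geq\cdots$ of $A^n$ are exactly the eigenvalues of $|A^n|$ arranged in decreasing order with multiplicity, and hence the decreasingly ordered eigenvalues of $|A^{n_k}|^{1/n_k}$ are precisely $(s_j^{(n_k)})^{1/n_k}$ for $j\geq 1$.

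Let $\mu_j(T)$ denote the $j$-th largest eigenvalue (counted with multiplicity) of a positive compact operator $T$. Davis's theorem \cite{CHAN} states that $(s_j^{(n)})^{1/n}\to |\lambda_j|$ for each $j$, where $(\lambda_j)$ is the eigen-sequence of $A$ introduced in Section~2 (nonzero eigenvalues repeated with algebraic multiplicity and $\lambda_j=0$ once the total algebraic multiplicity is exhausted). The Courant--Fischer min--max formula
$$\mu_j(T)=\max_{\dim M=j}\,\min_{x\in M,\,\|x\|=1}\langle Tx,x\rangle$$
yields the Lipschitz estimate $|\mu_j(S)-\mu_j(T)|\leq \|S-T\|$. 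Applying this with $S=|A^{n_k}|^{1/n_k}$ and passing $k\to\infty$, I obtain $\mu_j(B)=\lim_k\mu_j(|A^{n_k}|^{1/n_k})=\lim_k(s_j^{(n_k)})^{1/n_k}=|\lambda_j|$ for every $j$.

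The conclusion is then bookkeeping. Since $B$ is positive and compact, $\sigma(B)=\{0\}\cup\{\mu_j(B):j\geq 1\}=\{0\}\cup\{|\lambda_j|:j\geq 1\}$, which coincides with $\{|\lambda|:\lambda\in\sigma(A)\}$ (using that $0\in\sigma(A)$ in the infinite-dimensional setting). For $\lambda\neq 0$, self-adjointness of $B$ makes $\dim\ker(B-|\lambda|I)$ equal to the number of indices $j$ with $\mu_j(B)=|\lambda|$, namely $\#\{j:|\lambda_j|=|\lambda|\}$; by the very definition of the eigen-sequence this is the sum of the algebraic multiplicities of those $\eta\in\sigma(A)$ with $|\eta|=|\lambda|$.

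The main subtlety to monitor is the finite-spectrum case: if $A$ has only $M$ nonzero eigenvalues counted with multiplicity, then $\lambda_j=0$ for $j>M$, and one must verify $\mu_j(B)=0$ for $j>M$. This is automatic because Davis's theorem gives $(s_j^{(n)})^{1/n}\to 0$ for those tail indices, and the Lipschitz bound on $\mu_j$ transfers the vanishing to $B$. Beyond this enumeration matching, every ingredient is either directly quoted from Davis or is standard min--max theory.
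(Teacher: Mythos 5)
Your proposal is correct and follows essentially the same route as the paper: extract a norm-convergent subsequence, apply Davis's theorem to get $(s_j^{(n)})^{1/n}\to|\lambda_j|$, and transfer this to the ordered eigenvalues of $B$ via the Lipschitz continuity of eigenvalues of positive compact operators under norm perturbation (the paper cites Gohberg--Goldberg for the estimate $|s_i^{(n_l)}-\delta_i|\leq\||A^{n_l}|^{1/n_l}-B\|$ where you derive it from Courant--Fischer, but it is the same fact). Your explicit treatment of the tail indices when $A$ has only finitely many nonzero eigenvalues is a small point the paper leaves implicit.
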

    \begin{proof}
There is a subsequence $(|A^{n_l}|^{1/n_l})$ converging to $B$.
Let $(s_i^{(n)})$ and $(\delta_i)$ be eigensequences of
$|A^{n}|^{1/n}$ and $B$, respectively. Observe that as these are
positive operators eigen-sequences are uniquely determined.
By \cite[(b), Page 123]{GOH}, we know that
        \begin{align*}
            \left|s_i^{(n_l)}-\delta_i\right|\leq  \||A^{n_l}|^{1/n_l}-B\|,\text{ for }i\in\mathbb{N}.
        \end{align*}
This implies $s_i^{(n_l)}$ converges to $\delta_i$ as
$l\rightarrow\infty$ for every $i\in\mathbb{N}.$ By \cite{CHAN}, we
know that
        \begin{align*}
            s_i^{(n)}\rightarrow |\lambda_i| \text{ as }n\rightarrow\infty,
        \end{align*}
for every $i\in\mathbb{N}$.
As a result, we get $\delta_i=|\lambda_i|$ for every
$i\in\mathbb{N}.$  Since $B, A$ are compact operators, $0$ is in
their spectrum anyway. Hence $
\sigma(B)=\{|\lambda|:\lambda\in\sigma(A)\}.$

Note that the multiplicity of any $\delta_i$ is same as the sum of
the algebraic multiplicities of $\lambda_j$ for which
$|\lambda_j|=\delta_i$. As a result
$\dim\text{ker}(B-\delta_iI)$ 
is equal to sum of the algebraic multiplicity of
$\lambda_j\in\sigma(A)$ for which
            $|\lambda_j|=\delta_i$. This proves the second part.
    \end{proof}
A simple consequence of Lemma \ref{lemma spectrum of limit point} is
that the only limit point of $(|A^n|^{1/n})$ is singleton zero for a
compact quasinilpotent operator, for example Volterra operator.
\begin{lemma}
    Let $A\in\clb(\clh)$ be a compact operator with $\sigma(A)=\{0\}$. Then $\zeta _A=\{0\}$.
    Moreover,  $\limsup\langle|A^n|x,x\rangle^{1/n}=0$ for every $x\in\clh.$
\end{lemma}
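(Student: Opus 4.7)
The plan is to deduce this from the structure theorem for limit points in Lemma~\ref{lemma spectrum of limit point}, after first reducing to the normalized case $\|A\|<1$ via Lemma~\ref{scaling}. Since $\sigma(cA)=c\sigma(A)=\{0\}$ for every $c>0$, quasinilpotence is preserved under scaling; moreover $\zeta_{cA}=c\,\zeta_A$, so it suffices to prove $\zeta_A=\{0\}$ under the extra assumption $\|A\|<1$.

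Once we are in the case $\|A\|<1$, Lemma~\ref{prop limit points of comapct} guarantees that $\zeta_A$ is non-empty and each element is a positive compact operator. For any such $B\in\zeta_A$, Lemma~\ref{lemma spectrum of limit point} yields $\sigma(B)=\{|\lambda|:\lambda\in\sigma(A)\}=\{0\}$. Being positive, $B$ satisfies $\|B\|=r(B)=0$, hence $B=0$. Combining this with non-emptiness of $\zeta_A$ gives $\zeta_A=\{0\}$.

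For the additional claim $\limsup\langle|A^n|x,x\rangle^{1/n}=0$, I would not try to extract it from $\zeta_A=\{0\}$ via Lemma~\ref{lemma containment of subspaces}, since that containment runs in the wrong direction to pin down $V(A,0)$. The spectral radius formula gives it directly: observing that $\||A^n|\|^2=\|{A^{*}}^{n}A^n\|=\|A^n\|^2$, we have
\begin{equation*}
\langle|A^n|x,x\rangle^{1/n}\leq\bigl(\||A^n|\|\,\|x\|^2\bigr)^{1/n}=\|A^n\|^{1/n}\,\|x\|^{2/n},
\end{equation*}
and $\sigma(A)=\{0\}$ forces $\|A^n\|^{1/n}\to 0$, so the right hand side tends to zero for every $x\in\clh$.

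The only subtle point is recognizing that the ``moreover'' part concerns $\langle|A^n|x,x\rangle^{1/n}$ rather than $\langle|A^n|^{1/n}x,x\rangle$, so that the H\"older-McCarthy inequality~(1) of the paper does not bridge the two in the useful direction. The spectral radius bound above is the natural workaround, and apart from this observation the argument is an immediate assembly of Lemmas~\ref{scaling}, \ref{prop limit points of comapct}, and \ref{lemma spectrum of limit point}.
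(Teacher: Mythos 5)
Your proposal is correct and takes essentially the same route as the paper: the first part is deduced from Lemma~\ref{lemma spectrum of limit point} (positivity of $B\in\zeta_A$ plus $\sigma(B)=\{0\}$ forcing $B=0$), and the ``moreover'' part from the bound $\langle|A^n|x,x\rangle^{1/n}\leq\|A^n\|^{1/n}\|x\|^{2/n}$ together with the spectral radius formula. You are merely more explicit than the paper about the scaling reduction and the non-emptiness of $\zeta_A$ via Lemma~\ref{prop limit points of comapct}.
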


\begin{proof}
The first part is clear from the previous result. The second part
follows from the following computations.
    For non-zero $x\in\clh$, we have
    \begin{align*}
        \limsup\langle|A^n|x,x\rangle^{1/n}\leq&\limsup\||A^n|x\|^{1/n}\|x\|^{1/n}\\
        =&\limsup\|A^nx\|^{1/n}\|x\|^{1/n}\\
        \leq&\limsup\|A^n\|^{1/n}\|x\|^{2/n}=0.
    \end{align*}
    The last inequality holds because spectral radius of $A$ is zero.
\end{proof}

 Finally we are in a position to state and prove our
main theorem.

\begin{theorem}\label{main result norm1}
Let $A\in\clb(\clh)$ be a compact operator. Then the sequence
$(|A^n|^{1/n})$ converges in norm to a positive compact operator
$B$, where for $r\geq 0$,
        $$\text{ker}(B-r I)=\{x\in\clh:\limsup\langle|A^n|x,x\rangle^{1/n}=r\}\bigcup \{0\}.$$
Further, $(|A^n|)^{\frac{1}{n}})$ converges to $0$ if and only if
$A$ is quasi-nilpotent, that is, $\sigma(A)=\{0\}.$
\end{theorem}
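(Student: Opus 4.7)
The plan is to reduce to the case $\|A\|<1$ via Lemma~\ref{scaling}, at which point Theorem~\ref{collectively compact} and Lemma~\ref{prop limit points of comapct} ensure that the set $\zeta_A$ of norm limit points of $(|A^n|^{1/n})$ is nonempty and consists of positive compact operators. The whole theorem reduces to showing $\zeta_A$ is a singleton $\{B\}$: norm convergence of the full sequence then follows, because the argument of Lemma~\ref{prop limit points of comapct} applies equally well to any subsequence of $(|A^n|^{1/n})$, producing a further norm-convergent sub-subsequence whose limit must lie in $\zeta_A$ and hence equal $B$.

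The heart of the proof is to establish $V(A,r)=V(B,r)$ for every $B\in\zeta_A$ and every $r\geq 0$; Proposition~\ref{prop of uniqueness} then forces any two elements of $\zeta_A$ to agree. Lemma~\ref{lemma containment of subspaces} supplies $V(A,r)\subseteq V(B,r)$. For the reverse, I would fix $r>0$ not equal to any eigenvalue modulus $|\lambda_i|$ of $A$ and let $P_r$ be the Riesz projection of $A$ onto the finite-dimensional sum of generalized eigenspaces with $|\lambda_i|>r$; set $N_r=\mathrm{rank}\,P_r$. The subspace $\ker P_r$ is closed, $A$-invariant, and of codimension $N_r$ in $\clh$, and $A|_{\ker P_r}$ has spectral radius at most $r$, so the spectral radius formula gives $\limsup_n\|A^nx\|^{1/n}\leq r$ and hence $\limsup_n\langle|A^n|x,x\rangle^{1/n}\leq r$ for every $x\in\ker P_r$; thus $\ker P_r\subseteq V(A,r)\subseteq V(B,r)$. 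On the other hand, Proposition~\ref{prop positive operator subspace} together with Lemma~\ref{lemma spectrum of limit point} shows that $V(B,r)$ is closed with $\dim V(B,r)^\perp=N_r$. A closed subspace containing another closed subspace of the same finite codimension must equal it, so $V(A,r)=V(B,r)=\ker P_r$. The boundary values ($r=|\lambda_i|$ or $r=0$) are handled by the identity $V(T,r)=\bigcap_{r'>r}V(T,r')$, applied to both $T=A$ and $T=B$.

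With $\zeta_A=\{B\}$ and hence norm convergence $(|A^n|^{1/n})\to B$ established, the kernel identity follows by combining the pointwise limit $|A^n|^{1/n}x\to Bx=rx$ for $x\in\ker(B-rI)$ with Holder--McCarthy (Proposition~\ref{thm inequ furuta}), which gives $\langle|A^n|^{1/n}x,x\rangle\leq\langle|A^n|x,x\rangle^{1/n}\|x\|^{2-2/n}$ and hence $\liminf_n\langle|A^n|x,x\rangle^{1/n}\geq r$; the matching upper bound comes from $x\in V(A,r)$. Finally, the quasinilpotent characterization is immediate from Lemma~\ref{lemma spectrum of limit point}: $A$ is quasinilpotent iff $\sigma(A)=\{0\}$ iff $\sigma(B)=\{0\}$, and a positive operator with zero spectrum is $0$.

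The main obstacle is the codimension-matching step in the second paragraph, where the Riesz projection of $A$, the spectral radius bound on the infinite-dimensional complementary invariant subspace, and the exact count $\dim V(B,r)^\perp=N_r$ coming from Davis's theorem (via Lemma~\ref{lemma spectrum of limit point}) have to be aligned. The treatment of the boundary values of $r$ and the a priori unknown closedness of $V(A,r)$ also requires a bit of care.
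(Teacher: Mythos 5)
Your proposal follows essentially the same route as the paper: reduce to $\|A\|<1$ by Lemma \ref{scaling}, use Theorem \ref{collectively compact} and Lemma \ref{prop limit points of comapct} to get $\zeta_A\neq\emptyset$, prove $V(A,r)=V(B,r)$ for every $B\in\zeta_A$ by combining the Riesz projection of $A$ over a circle of radius $r$ avoiding the eigenvalue moduli with the codimension count supplied by Lemma \ref{lemma spectrum of limit point}, and then invoke Proposition \ref{prop of uniqueness} together with the subsequence argument. The paper organizes the codimension step as an induction over the successive levels $|\lambda_{i_1}|>|\lambda_{i_2}|>\cdots$ and reaches the boundary values by letting $r$ decrease to $|\lambda_{i_2}|$; your direct sandwich $\ker P_r\subseteq V(A,r)\subseteq V(B,r)$ for $r$ not equal to any $|\lambda_i|$, followed by $V(T,r)=\bigcap_{r'>r}V(T,r')$, is the same argument, slightly more cleanly packaged.

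The one genuine gap is the kernel identity. Your argument (pointwise convergence plus Holder--McCarthy, plus $\ker(B-rI)\subseteq V(B,r)=V(A,r)$) yields only the inclusion $\ker(B-rI)\setminus\{0\}\subseteq\{x:\limsup\langle|A^n|x,x\rangle^{1/n}=r\}$; the reverse inclusion is nowhere addressed. In fact the reverse inclusion is false as stated for $r>0$: take $A=B=\mathrm{diag}(1/2,1/4,0,0,\ldots)$ and $x=e_1+e_2$; then $\langle|A^n|x,x\rangle^{1/n}=(2^{-n}+4^{-n})^{1/n}\to 1/2$, so $x$ lies in the right-hand side with $r=1/2$ but not in $\ker(B-\tfrac12 I)$ (indeed the right-hand side is not even a subspace, while the left-hand side is). What your argument, and the paper's, actually establishes is $\mathrm{ran}\,E_B[0,r]=V(B,r)=V(A,r)=\{x:\limsup\langle|A^n|x,x\rangle^{1/n}\leq r\}$. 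This defect is inherited from the theorem as stated --- the paper's own proof likewise only ``observes'' the displayed equality --- so it is not specific to your approach, but you should not present that equality as proved; everything else in your outline is sound.
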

    \begin{proof}
Making use of Lemma \ref{scaling}, in the following we assume
  $\|A\|\leq 1$.

First we will show that if  $B\in\zeta _A$, then
        \begin{align*}
            V(A,r)=V(B,r)\text{ for }r\geq 0.
        \end{align*}
By Proposition \ref{prop of uniqueness}, this will show that $\zeta
_A$ is singleton.
 By
Lemma \ref{lemma containment of subspaces}, we already know that
$V(A,r)\subseteq V(B,r)$ for every $r\geq 0$.  Let $(\lambda_i)$ be
an eigen-sequence of  $A$. Note that for non-zero $x\in\clh$, we
have
$$\langle|A^n|x,x\rangle^{1/n}\leq\||A^n|x\|^{1/n}\|x\|^{1/n}\leq\||A^n|\|^{1/n}\|x\|^{2/n}=s_1(A^n)^{1/n}\|x\|^{2/n}.$$
By \cite{CHAN}, we get that
$\limsup\langle|A^n|x,x\rangle^{1/n}\leq|\lambda_1|$ for non-zero
vector $x\in\clh$. As a result,
$V(B,|\lambda_1|)=\clh=V(A,|\lambda_1|),$ where the first equality
follows from Proposition \ref{prop positive operator subspace}.
Suppose $|\lambda_1|=|\lambda_2|=\cdots=|\lambda_{i_1}|$ and
$|\lambda_{i_1+1}|<|\lambda_{i_1}|$. Then
$V(B,|\lambda_j|)=V(B,|\lambda_1|)=\clh=V(A,|\lambda_1|)=V(A,|\lambda_j|)$
for $1\leq j\leq i_1$.

    Now, assume that $|\lambda_{i_1+1}|=|\lambda_{i_1+2}|=\cdots=|\lambda_{i_2}|$.
We claim that $V(B,|\lambda_{i_2}|)=V(A,|\lambda_{i_2}|)$.  Consider a real number $r$ such that $|\lambda_{i_1+1}|<r<|\lambda_{i_1}|.$     Let $P_{r}$ be the Riesz projection corresponding to the operator $A$ defined by
$$P_{r}=\frac{1}{2\pi i}\int_{\Gamma_r}(\mu-A)^{-1}d\mu,$$
    where integration is taken over the circle centered at the origin with radius $r$. By \cite[Page 582,25]{DUN}, we have
    $$\limsup\langle|A^n|x,x\rangle^{1/n}\leq\limsup\|A^nx\|^{1/n}<r\text{ for }x\in\text{ran}P_r.$$
Hence $\text{ran}P_r\subseteq V(A,r)$ and consequently $\dim V(A,r)^{\perp}\leq\dim\text{ker}P_r=i_1$.
Note that as $B$ is a positive compact operator, $\clh=\text{ker}B \oplus \underset{i=1, \lambda _i\neq 0}{\overset{\infty}{\oplus}}\text{ker}(B-|\lambda_i|I)$ and 
by Proposition \ref{prop positive operator subspace}, we have
    \begin{align*}
    \dim(V(B,r))^{\perp}=   \dim( V(B,|\lambda_{i_2}|)^{\perp})=\dim \text{ker}(B-|\lambda_1|I)=i_1\geq\dim( V(A,r)^{\perp}).
    \end{align*}
    By Lemma \ref{lemma containment of subspaces}, we know that $V(A,r)\subseteq V(B,r)$ and consequently $$\dim( V(B,r)^{\perp})\leq\dim( V(A,r)^{\perp}).$$
This implies that $\dim( V(B,r)^{\perp})=\dim( V(A,r)^{\perp}).$
 Now from  $V(A,r)\subseteq V(B,r)$, we get
$V(B,r)=V(A,r)$. Since $|\lambda_k|=|\lambda_{i_2}| $ for $i_1+1\leq
k\leq i_2$, we have
$V(B,r)=V(B,|\lambda_{k}|)=V(B,|\lambda_{i_2}|)=V(A,r)\supseteq
V(A,|\lambda_{i_2}|)=V(A,|\lambda_{k}|)$ for $i_1+1\leq k\leq i_2$.
Let $x\in V(B,|\lambda_{i_2}|)$. Then $x\in V(B,r)$ for every
$|\lambda_{i_1+1}|<r<|\lambda_{i_1}|$, as a result $x\in V(A,r)$.
Taking limit $r$ going to $|\lambda_{i_2}|$, we get that $x\in
V(A,|\lambda_{i_2}|)$ and this proves our claim that
$V(B,|\lambda_{i_2}|)=V(A,|\lambda_{i_2}|).$  Continuing the same
procedure, we get that $V(A, |\lambda|)=V(B,|\lambda|)$ for every
non-zero $\lambda$. Also, observe that
$\text{ker}(B-|\lambda|I)=\{x\in\clh:\limsup\langle|A^n|x,x\rangle^{1/n}=|\lambda|\}\bigcup
\{0\}$.

 We claim that, if $\lambda=0$, then $V(B,0)=\text{ ker}B=\{x\in\clh:\limsup\langle|A^n|x,x\rangle^{1/n}=0\}=V(A,0).$ If $\limsup\langle|A^n|x,x\rangle^{1/n}=0$, then $\liminf\langle|A^n|x,x\rangle^{1/n}=0$ and as a result the following limit exists and $$\lim\langle|A^n|x,x\rangle^{1/n}=0.$$
 By Proposition \ref{thm inequ furuta}, $\langle
Bx,x\rangle=\lim\langle|A^{n_k}|^{1/{n_k}}x,x\rangle=0$.  Hence
$x\in \text{ ker}(B)$. On the other hand, if $y\in\text{ ker}(B)$
and $\limsup\langle|A^n|y,y\rangle^{1/n}=|\lambda|\ne
0$, then we know that $x\in \text{ ker}(B-|\lambda|I)$, which is a
contradiction.



Suppose $B_1, B_2\in\zeta_A$. Then
    \begin{align*}
        V(B_1,r)=V(A,r)=V(B_2,r),\,r\geq 0.
    \end{align*}
Proposition \ref{prop of uniqueness} implies that $B_1=B_2$. Thus
$(|A^n|^{1/n})$ has a unique norm limit point, say $B$. We know that
any WOT limit point of this sequence is also a norm limit point.
From the sequential WOT compactness of the unit ball of
$\mathscr{B}(\mathcal{H})$, it follows that $(|A^n|^{1/n})$
converges to $B$. By Lemma \ref{lemma spectrum of limit point} and
the spectral theorem for positive compact operators, we have
    \begin{align*}
        B=\underset{\delta =|\lambda |,~\mbox {for some} ~\lambda \in\sigma(A)}{\sum}\delta  E_\delta,
    \end{align*}
where $E_\delta$ is orthogonal projection onto $\text{ker}(B-\delta
I)=\{x\in\clh:\limsup\langle|A^n|x,x\rangle^{1/n}=\delta
\}\cup\{0\}.$
    \end{proof}

If we drop the condition that the operator is compact then the above result does not hold. The following example illustrates this.
\begin{example}\label{example1}
    Let $L$ be the left shift operator on $\ell^2(\mathbb{N})$ defined by
    \begin{align*}
        L(x_1,x_2,\ldots)=(x_2,x_3,\ldots)\text{ for }(x_1,x_2,\ldots)\in\ell^2(\mathbb{N}).
    \end{align*}
    Then $L^{*n}L^n=P_n$, where $P_n$ is the orthogonal projection on the space $\overline{\text{span}}\{e_i:i>n\}$, where $\{e_m:m\in\mathbb{N}\}$ is the standard orthonormal basis for $\ell^2(\mathbb{N})$. Consequently $(|L^n|^{1/n})=(P_n)$ converges to zero in SOT but not in the norm topology.
\end{example}
\begin{example}\label{example2}
    Consider an operator $T$  on $\ell^2(\mathbb{N})$ defined on the standard basis $(e_i)_{i\in\mathbb{N}}$ of $\ell^2(\mathbb{N})$ by
    \begin{align*}
        Te_i=\begin{cases}
            \frac{e_2}{2}&\text{ if }i=1;\\
            \frac{e_{2^n+1}}{2^{2^n-1}}&\text{ if }i=2^n;\\
            2e_{i+1}&\text{ otherwise}.
        \end{cases}
    \end{align*}
    Note that $T$ is a weighted shift with weights less than or equal to $2$ and hence $T$ is a  bounded linear operator.
To have more clarity about the operator $T$, we give an expended
form of the operator $T$ and its adjoint:
    \begin{align*}
    T(x_1,x_2,\ldots)=&\left(0,\frac{x_1}{2},\frac{x_2}{2},2x_3, \frac{x_4}{2^3},2x_5,2x_6,2x_7,\frac{x_8}{2^7},2x_9,\ldots\right),\\
    T^*(x_1,x_2,\ldots)=&\left(\frac{x_2}{2},\frac{x_3}{2},2x_4,\frac{x_5}{2^3},2x_6,2x_7,2x_8,\frac{x_9}{2^7},2x_{10},\ldots\right).
        \end{align*}
     Observe that $(T^*)^nT^n$ is diagonal for every $n$. Then by simple computations, it follows that
    \begin{align*}
        |T|e_1=\frac{e_1}{2},\, |T^2|^{1/2}e_1=\frac{e_1}{2},\,
        |T^3|^{1/3}e_1=\frac{e_1}{2^{1/3}}, \,  |T^4|^{1/4}e_1=\frac{e_1}{2},
    \end{align*} and for $n\geq 2,$
    \begin{align*}
        |T^{2^n-1}|^{\frac{1}{2^n-1}}e_1=\frac{e_1}{2^{\frac{1}{2^n-1}}},\, |T^{2^n}|^{\frac{1}{2^n}}e_1=\frac{e_1}{2}.
    \end{align*}
    As a result
    \begin{align*}
        \langle |T^{2^n-1}|^{\frac{1}{2^n-1}}e_1,e_1\rangle&=\frac{1}{2^{\frac{1}{2^n-1}}}\rightarrow 1,\\
        \langle |T^{2^n}|^{\frac{1}{2^n}}e_1,e_1\rangle&=\frac{1}{2}\rightarrow\frac{1}{2}.
    \end{align*}
In other words the sequence $(\langle|T^m|^{1/m}e_1,e_1\rangle)$ has
two subsequences converging to two different points, and hence the
sequence is not convergent. Consequently,  $(|T^m|^{1/m})$ is not
convergent in WOT.
\end{example}

\section*{Acknowledgments}
Bhat gratefully acknowledges funding from  SERB (India) through J C
Bose Fellowship No. JBR/2021/000024. In an earlier
version of this article to prove Lemma \ref{prop limit points of
comapct}, we had used Exercise  5 of \cite[Page 81]{CON}, which
claims that if a sequence of operators $\{T_n\}$ converges to 0 in
SOT and $K$ is compact then $\{KT_n\}$  converges to 0 in norm. We
thank  S. Nayak and R. Shekhawat for pointing out  that this is
false.

\end{document}